\DeclareFontFamily{OT1}{rsfs}{}
\DeclareFontShape{OT1}{rsfs}{n}{it}{<-> rsfs10}{}
\DeclareMathAlphabet{\mathscr}{OT1}{rsfs}{n}{it}
\DeclareRobustCommand{\mymarginpar}[1]{%
 \marginpar[\raggedleft#1]{\raggedright#1}}
\newtheorem{theorem}{Theorem}[section]
\newtheorem{lemma}[theorem]{Lemma}
\newtheorem{corol}[theorem]{Corollary}
\theoremstyle{definition} }
\theoremstyle{remark} \newtheorem{remark}[theorem]{Remark}
\newtheorem{example}[theorem]{Example}}
\newcommand{\Rbb}{{\mathbb{R}}}
\newcommand{\Zbb}{{\mathbb{Z}}}
\newcommand{\cL}{{\mathscr L}}
\newcommand{\cO}{{\mathscr O}}
\newcommand{\cT}{{\mathscr T}}
\newcommand{\cU}{{\mathscr U}}
\newcommand{\hU}{{\widehat U}}
\newcommand{\hcT}{\widehat{\mathscr T}}
\newcommand{\hcU}{\widehat{\mathscr U}}
\newcommand{\hsigma}{\hat\sigma}
\newcommand{\hM}{{\widehat M}}
\newcommand{\hN}{{\widehat N}}
\newcommand{\hp}{{\hat p}}
\newcommand{\ua}{\underline a}
\newcommand{\ue}{\underline e}
\newcommand{\uv}{\underline v}
\newcommand{\uX}{\underline X}
\newcommand{\huv}{\hat{\underline v}}
\newcommand{\Til}[1]{{\widetilde{#1}}}
\newcommand{\qede}{\hfill$\lrcorner$}
\DeclareMathOperator{\Vol}{Vol}
\DeclareMathOperator{\hVol}{\widehat \Vol}
\title{
Segre classes as integrals over polytopes
}
\author{Paolo Aluffi}
\address{
Mathematics Department, 
Florida State University,
Tallahassee FL 32306, U.S.A.
}
\email{aluffi@math.fsu.edu}
\begin{document}

\begin{abstract}
We express the Segre class of a monomial scheme---or, more generally, a scheme
monomially supported on a set of divisors cutting out complete intersections---in 
terms of an integral computed over an associated body in euclidean space.
The formula is in the spirit of the classical Bernstein-Kouchnirenko theorem computing 
intersection numbers of equivariant divisors in a torus in terms of mixed volumes, but 
deals with the more refined intersection-theoretic invariants given by Segre classes, 
and holds in the less restrictive context of `r.c.~monomial schemes'.
\end{abstract}

\maketitle

\section{Introduction}\label{intro}

\subsection{}
Let $V$ be a variety (or more generally a pure-dimensional scheme of finite type over an 
algebraically closed field) and let $X_1,\dots, X_n$ be effective Cartier
divisors in $V$. A choice of nonnegative multiplicities $(a_1,\dots, a_n)\in \Zbb^n$
determines an effective divisor, obtained by taking $X_i$ with multiplicity~$a_i$. We call such a
divisor a {\em monomial\/} in the $X_i$'s, and we call {\em monomial scheme\/} (w.r.t.~the
chosen divisors) an intersection of such monomials. In the literature (e.g., \cite{MR2165388}) 
this terminology is reserved for the case in which $V$ is nonsingular and the divisors $X_i$ are 
nonsingular and meet with normal crossings. We will refer to this as the `standard' situation.
In this paper we consider a substantially more general case, in which $V$ may be any
scheme as above and the divisors $X_1,\dots,X_n$ are only required to intersect `completely',
in the sense that their local defining equations form regular sequences. The resulting
monomial scheme is a `r.c.~monomial scheme' in the terminology of~\cite{arXiv:1310.1261}.
(We should stress that a r.c.~monomial scheme is {\em not\/} necessarily a complete 
intersection!)

A monomial scheme $S$ is determined by an $n$-uple of divisors and the 
choice of finitely many lattice points in $\Zbb^n$. We call the complement of the convex 
hull of the positive orthants translated at these points the `Newton region' corresponding 
to (this description of) $S$. In \cite{Scms}, Conjecture~1, we proposed a formula for the 
{\em Segre class\/} of a monomial subscheme~$S$ in the standard situation, and 
proved this formula in several representative cases. The purpose of this note is to prove 
a generalization of the full statement of Conjecture~1 in~\cite{Scms}.

\begin{theorem}\label{main}
Let $S\subseteq V$ be a r.c.~monomial scheme with respect to a choice of $n$ divisors
$X_1,\dots, X_n$, and let $N$ be the corresponding Newton region. Then
\begin{equation}\label{eq:main}
s(S,V)=\int_N \frac{n! X_1\cdots X_n\, da_1\cdots da_n}
{(1+a_1 X_1+\cdots +a_n X_n)^{n+1}}\quad.
\end{equation}
\end{theorem}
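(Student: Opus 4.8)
\medskip
\noindent\textbf{Proof strategy.}

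A remark first on the meaning of the right-hand side of \eqref{eq:main}. Expanding the rational form and integrating term by term produces divergent integrals as soon as $N$ is unbounded; the intended reading of $\int_N$ is the iterated improper integral of the displayed rational function of the real parameters $a_1,\dots,a_n$ (the power $n+1$ in the denominator makes each iterated integration converge), with the divisor classes substituted afterwards. As the $X_i$ are nilpotent, the outcome is a polynomial in them capped against $[V]$, and \eqref{eq:main} becomes an identity of universal polynomials whose coefficients depend only on $\Gamma$. Hence it suffices to check \eqref{eq:main} in one rich enough test case: take $V=(\Pbb^N)^n$ with $N\gg 0$, with $X_i$ the class of a coordinate hyperplane in the $i$-th factor and $S$ the corresponding torus-invariant monomial subscheme. (A simple normal crossings configuration with a monomial subscheme is \'etale-locally modeled on this toric picture, Segre classes being compatible with the flat base changes involved; and $A^\bullet\big((\Pbb^N)^n\big)=\Zbb[X_1,\dots,X_n]/(X_1^{N+1},\dots,X_n^{N+1})$, so validity there for all $N$ is validity as an identity of rational functions.)

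The plan is then to \emph{resolve and push forward}. Let $\pi\colon\widehat V\to V$ be a toric log resolution of the ideal of $S$: a composition of blow-ups along (strict transforms of) intersections of the $X_i$ and their exceptional divisors, chosen so that the fan of $\widehat V$ refines both the fan of $V$ and the normal fan of $\Gamma$, and is subdivided only over $\mathrm{Supp}\,S$. Then $\widehat V$ is a smooth projective toric variety, its invariant prime divisors $D_\rho$ form a simple normal crossings divisor, and $\pi^{-1}S$ is the invariant Cartier divisor $D=\sum_\rho c_\rho D_\rho$, $c_\rho=\min_{q\in\Gamma}\langle v_\rho,q\rangle$ ($v_\rho$ the primitive generator of $\rho$). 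By the birational invariance of Segre classes (they push forward along proper birational maps that are isomorphisms away from $S$),
\[
s(S,V)=\pi_*\!\left(\frac{D}{1+D}\cap[\widehat V]\right)=[V]-\pi_*\!\left(\frac{1}{1+D}\cap[\widehat V]\right).
\]
Now I would evaluate both sides of \eqref{eq:main} by one common cone decomposition. Toric intersection theory — using the relations $\sum_\rho\langle m,v_\rho\rangle D_\rho\sim 0$, equivalently equivariant localization at the fixed points $x_\sigma$ — expresses $\pi_*\!\big(\frac{D}{1+D}\cap[\widehat V]\big)$ as a sum over the maximal cones $\sigma=\langle v_1,\dots,v_n\rangle$ of the subdivided fan, each summand an explicit rational expression in the $X_i$. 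On the other side $\int_N\omega=\sum_\sigma\int_{N\cap\sigma}\omega$, where each $N\cap\sigma$ is a simplex (or a translated simplicial cone, if $N$ is unbounded). The substitution $a=\sum_i t_iv_i$ and the classical simplex formula
\[
\int_{\Delta}\frac{n!\,da_1\cdots da_n}{(1+a_1X_1+\cdots+a_nX_n)^{n+1}}=\frac{n!\,\Vol(\Delta)}{\prod_{p}\,(1+p_1X_1+\cdots+p_nX_n)}
\]
(the product over the vertices $p$ of $\Delta$, with the improper version for cones, where the vertex at infinity drops out) turn $\int_{N\cap\sigma}\omega$ — carrying the universal factor $X_1\cdots X_n$ from the $n$ original divisors — into precisely the summand attached to $x_\sigma$. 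Summing over $\sigma$ gives $\int_N\omega=s(S,V)$.

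The heart of the argument, and the step I expect to be the main obstacle, is this cone-by-cone matching: one must arrange the resolution so that the (Brion-type) decomposition of the polytope integral and the localization decomposition of the toric push-forward run over the same maximal cones with equal summands, and in particular identify why the contribution of $x_\sigma$ is governed by the linear forms built from $v_1,\dots,v_n$ and the facet of $\Gamma$ ``seen'' from $\sigma$ rather than by the coefficients $c_\rho$ alone. The remaining points — making precise the interpretation of \eqref{eq:main} for unbounded $N$ (an identity of rational functions, not of term-wise integrals) and justifying the reduction to the toric test case — are comparatively routine, but the toric push-forward computation, though it reduces cone by cone to the elementary integral above, is where essentially all of the work lies.
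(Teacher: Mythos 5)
Your overall skeleton---principalize $S$ by a modification built from the divisor configuration, invoke birational invariance of Segre classes, and match the resulting push-forward against a cone-by-cone decomposition of the integral---is the same as the paper's (which uses Goward's principalization by codimension-$2$ monomial blow-ups, one blow-up at a time). But the proposal has two genuine gaps. The first is the reduction to the single test case $V=(\Pbb^N)^n$. This rests on the claim that $s(S,V)$ is a universal polynomial in $X_1,\dots,X_n$ with coefficients depending only on the Newton region, justified by saying the configuration is \'etale-locally toric and Segre classes are compatible with flat base change. Segre classes are not \'etale-local invariants: they are classes in $A_*S$ computed from the global geometry of the blow-up of $V$ along $S$, and compatibility with flat pullback does not let you transport an identity of Chow classes from a local model to $V$. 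The universality you need is (given that the right-hand side of \eqref{eq:main} is manifestly universal) essentially equivalent to the theorem itself. It can be established by running a Goward-type principalization and observing that every push-forward along the way is governed by universal formulas---but at that point the toric specialization buys nothing, and the real work still has to be done on $V$.

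The second gap is the one you flag yourself: the cone-by-cone matching of the localization expansion of $\pi_*\bigl(\tfrac{D}{1+D}\cap[\widehat V]\bigr)$ with the simplex decomposition of $\int_N$ is precisely where the content of the proof lies, and it is not carried out. In the paper this is the construction of compatible triangulations $\hcU$ of $\hN'$ and $\cU$ of $N'$, the partition $\hU=\hU_0\amalg\hU_1\amalg\hU'\amalg\hU''$ of the top-dimensional simplices upstairs, and Lemmas~\ref{trialem} and~\ref{pflemma}: some simplices upstairs contribute $0$ after push-forward (because their numerators contain $\Til X_1\cdot\Til X_2$ with $\Til X_1\cap\Til X_2=\emptyset$, or because the projection formula kills them), and the remaining ones must be re-matched to the simplices downstairs by a bijection $\alpha$ that is \emph{not} the obvious link-of-$\ua_0$ correspondence. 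Nothing in your sketch accounts for these cancellations and re-matchings, nor for the degenerate case where the monomials have nonempty common intersection of codimension $<n$ (the paper's Lemma~\ref{prince} and Lemma~\ref{supp}). As written, the proposal is a plausible plan rather than a proof.
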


Note that different monomial schemes may have the same Newton region. According 
to~\eqref{eq:main}, such schemes have the same Segre class. This phenomenon is
due to the fact that the corresponding ideals have the same integral closure, cf.~Remark~2.5
in~\cite{Scms}.

\subsection{}
The right-hand side of \eqref{eq:main} is interpreted by evaluating the integral formally 
with parameters $X_1,\dots$, $X_n$; the result is a rational function in $X_1,\dots,X_n$, 
with a well-defined
expansion as a power series in these variables, all of whose terms may be interpreted
as intersection products of the corresponding divisors in $V$. These products are 
naturally supported on subschemes of $S$ (cf.~Lemma~\ref{supp}). 
The statement is that evaluating the terms 
of the series as these intersection products gives the Segre class $s(S,V)$ of $S$ in $V$
as an element of the Chow group of $S$. 

For a thorough treatment of Segre classes and their role in Fulton-MacPherson intersection 
theory, see~\cite{85k:14004}. Segre classes are one of the basic ingredients in the definition
of the intersection product (\cite{85k:14004}, Proposition 6.1 (a)). Residual intersection 
formulas are naturally written in terms of Segre classes (\cite{85k:14004}, Chapter 9). 
Many problems in enumerative geometry can be phrased in terms of computations of 
Segre classes. Interesting invariants of singularities, such as Milnor classes or 
Donaldson-Thomas invariants, may also be expressed in terms of Segre classes.
While the schemes considered in Theorem~\ref{main} may be too special for direct
applications of this type, the approach to the computation of Segre classes presented
in this result appears to be completely novel. We hope of course that it will be possible
to extend the scope of~\eqref{eq:main} to yet more general schemes.

We refer the reader to~\cite{Scms} for further contextual remarks 
concerning Segre classes and for examples illustrating~\eqref{eq:main} in the 
standard situation. In~\cite{Scms}, the formula is established
in the case $n=2$, and for several families of examples for arbitrary $n$. Also, the
formula is stated in~\cite{Scms} after push-forward to the ambient variety, and limited
to standard monomial schemes. In this paper the formula is proved as an equality of 
classes in the Chow group of $S$, for any~$n$, and for the larger class of r.c.~monomial 
schemes. This is a substantially more general setting: the ambient scheme $V$ and
the divisors $X_i$ need not be smooth, nor do the $X_i$ need to meet transversally.

\subsection{}
The proof of Theorem~\ref{main} is given in \S\ref{proof}. It is based on the birational
invariance of Segre classes and the fact that r.c.~monomial schemes may be 
principalized by a sequence of blow-ups at r.c.~monomial centers of codimension~$2$.
This is proven by C.~Harris~(\cite{arXiv:1310.1261}), extending the analogous result
for standard monomial schemes due to R.~Goward (\cite{MR2165388}).
This fact and an explicit computation in the principal
case reduce the proof to showing that the integral appearing in~\eqref{eq:main}
is preserved under blow-ups at codimension~$2$ (r.c.) monomial centers. This in turn 
follows from an analysis of triangulations associated with these blow-ups.

In~\cite{arXiv:1308.4152} we apply Theorem~\ref{main} to the computation of 
{\em multidegrees\/} of r.c.~monomial rational maps. This is one point of contact
of the result presented here with the existing literature: in the case
of (standard) monomial rational maps between projective spaces, the multidegrees
may be computed by {\em mixed volumes\/} of Minkowski sums of 
polytopes as an application of the classical Bernstein-Kouchnirenko theorem
(see {\em e.g.,\/} \cite{MR2221122}, \S4, or \cite{Dolgachev}, \S3.5).
In the application reviewed in~\cite{arXiv:1308.4152}, Theorem~\ref{main} leads
to an alternative expression for the multidegrees, which reproduces the volume
computation for the top multidegree, and generalizes it to the r.c.~setting and to 
rational maps on any projective variety. Formulas for the other multidegrees lead 
to integral expressions for the mixed volumes appearing in Bernstein-Kouchnirenko. 

It is natural to ask whether a result such as Theorem~\ref{main} may hold for 
non-monomial schemes. One possibility is that an integral formula analogous 
to~\eqref{main} may hold with $N$ replaced by a suitable body; this would be in
line with recent results for intersection numbers of divisors on open varieties,
as in~\cite{MR2950767}. (In fact, these results were our first motivation to look
for formulas for Segre classes in terms of `volumes', which led to the formulation
of Theorem~\ref{main}.)
For a different viewpoint, in~\cite{arXiv:1308.4153} we 
propose a formulation of the result 
(in the standard monomial setting) in terms of the {\em log canonical thresholds\/} 
of suitable extensions of the ideal defining the subscheme. This expression makes
sense for more general schemes, and may offer a candidate for a generalization
of Theorem~\ref{main}. 
\smallskip

{\em Acknowledgment.} The author's research is partially supported by a Simons 
collaboration grant. 


\section{Proof}\label{proof}

\subsection{}
We begin by recalling R.~Goward's theorem, and C.~Harris's generalization. 
First assume that we are in the standard
monomial situation: $V$ is a nonsingular variety, $X_1,\dots, X_n$
are nonsingular divisors meeting with normal crossings, and $S$ is the intersection of
monomial hypersurfaces, i.e., effective divisors $D_j$ supported on $\cup_i X_i$. 
According to Theorem~2 
in~\cite{MR2165388}, there exists a sequence of blow-ups at nonsingular centers
producing a proper birational morphism $\rho: V' \to V$ such that $\rho^{-1}(S)$ is
a divisor with normal crossings. Further, as explained in \S4 of \cite{MR2165388},
the centers of the blow-ups may all be chosen to be codimension-$2$ intersections of 
(proper transforms of) the original components $X_i$ and of the exceptional divisors 
produced in the process. In~\cite{arXiv:1310.1261}, C.~Harris shows that both statements
generalize verbatim to the {\em r.c.~monomial case.\/} Here no nonsingularity restrictions
are posed on~$V$ or on the divisors $X_1,\dots,X_n$. The divisors meet with~{\em 
regular crossings\/} if for all $A\subseteq \{1,\dots,n\}$ and all $p\in \cap_{i\in A} X_i$,
the local equations for $X_i$, $i\in A$, form a regular sequence at $p$ 
(\cite{arXiv:1310.1261}). A {\em r.c.~monomial scheme\/} is a scheme defined by
effective divisors supported on $\cup_i X_i$, where the $X_i$ meet with regular crossings.
According to Theorem~1 in~\cite{arXiv:1310.1261}, every r.c.~monomial scheme may
be principalized by a sequence of blow-ups at centers of codimension~2; as in 
Goward's result, these centers may be chosen to be intersections of proper transforms
of the $X_i$'s and of the exceptional divisors.

\begin{remark}\label{nota}
Let $X_1,\dots, X_n$ be a set of divisors meeting with regular crossings, and let 
$\pi: \Til V \to V$ be the blow-up along the intersection of two of these hypersurfaces; 
without loss of generality this is $X_1\cap X_2$, and (by definition of regular crossings)
it is a regularly embedded subscheme of codimension~$2$. Then 
\begin{itemize}
\item The proper transforms $\Til X_i$ of the components $X_i$, together with the exceptional 
divisor $E$, form a divisor with regular crossings (\cite{arXiv:1310.1261}, Proposition~3);
\item $\Til X_i=\pi^{-1}(X_i)$ for $i\ge 3$;
\item If $D$ is a monomial in the $X_i$'s, then $\pi^{-1}(D)$ is a monomial in the collection
$E$, $\Til X_i$;
\item In fact, if $D=\sum a_i X_i$, then $\pi^{-1}(D)=(a_1+a_2) E + \sum_i a_i \Til X_i$.
\end{itemize}
In view of the second point, we will write $X_i$ for $\Til X_i=\pi^{-1}(X_i)$ for $i\ge 3$;
this abuse of notation is good mnemonic help when using the projection formula. 
For example, the projection formula gives 
$\pi_*(\Til X_1 \cdot \Til X_3) = \pi_*(\Til X_1 \cdot  \pi^*(X_3)) =X_1\cdot X_3$.
We find this easier to parse if we write $\pi_*(\Til X_1 \cdot X_3) =X_1\cdot X_3$,
particularly as
$
\pi_*(\Til X_1\cdot \Til X_2)=0
$
since $\Til X_1\cap \Til X_2=\emptyset$ to begin with. Also note that
$
\pi_*(E\cdot \Til X_2)=X_1\cdot X_2$ and $\pi_*(E\cdot X_i)=0
$
for $i\ge 3$.
\qede\end{remark}

As remarked here, at each step in the sequence considered by Harris the inverse 
image of~$S$ is a monomial scheme with respect to the collection of proper transforms 
of the $X_i$'s and of the previous exceptional divisors, and the next blow-up is performed 
along the intersection of two of these hypersurfaces. In order to prove Theorem~\ref{main},
therefore, it suffices to prove the following two lemmas.

\begin{lemma}\label{prince}
Let $S$ be a r.c.~monomial scheme, and assume $S$ is a divisor.
Then \eqref{eq:main} holds for $S$.
\end{lemma}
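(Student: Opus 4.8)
The plan is to evaluate the right‑hand side of \eqref{eq:main} directly and match it with the classical expression $s(D,V)=c(\cO_V(D))^{-1}\cap[D]$ for the Segre class of an effective Cartier divisor. First I would note that, since $S$ is a divisor, it is a single monomial $D=a_1X_1+\cdots+a_nX_n$ (with $a_i\in\Zbb_{\ge 0}$), and that its Newton region may be taken to be $N=\Rbb^n_{\ge 0}\setminus(\underline a+\Rbb^n_{\ge 0})$, where $\underline a=(a_1,\dots,a_n)$. Indeed, if a (possibly redundant) intersection $S=\bigcap_jD_j$ is used, then every $D_j\ge D$, so the convex hull of the translated orthants still lies in $\underline a+\Rbb^n_{\ge 0}$; the region by which $N$ then exceeds $\Rbb^n_{\ge 0}\setminus(\underline a+\Rbb^n_{\ge 0})$ decomposes into pieces, each of which integrates to a class divisible by a product $X_{i_1}\cdots X_{i_r}$ with $X_{i_1}\cap\cdots\cap X_{i_r}=\emptyset$ --- such an intersection must be empty, for otherwise $S$ would fail to be a divisor --- and such a class vanishes. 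So we may assume $N=\Rbb^n_{\ge 0}\setminus(\underline a+\Rbb^n_{\ge 0})$.

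Now $\Rbb^n_{\ge 0}$ is the disjoint union of $N$ and the translated orthant $\underline a+\Rbb^n_{\ge 0}$, and for $X_1,\dots,X_n$ regarded as positive real parameters the integrand of \eqref{eq:main} is positive and $O\bigl(|\underline a|^{-(n+1)}\bigr)$ on the $n$‑dimensional domain, so all three integrals converge and $\int_N(\cdots)=\int_{\Rbb^n_{\ge 0}}(\cdots)-\int_{\underline a+\Rbb^n_{\ge 0}}(\cdots)$. For any corner $\underline c$, translating by $\underline c$ and integrating successively in $a_1,\dots,a_n$ telescopes --- the $j$‑th integration drops the exponent in the denominator by one and contributes a factor $1/\bigl((n-j+1)X_j\bigr)$ --- leaving $\int_{\underline c+\Rbb^n_{\ge 0}}(\cdots)=\bigl(1+c_1X_1+\cdots+c_nX_n\bigr)^{-1}$. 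Hence $\int_{\Rbb^n_{\ge 0}}(\cdots)=1$, $\int_{\underline a+\Rbb^n_{\ge 0}}(\cdots)=(1+D)^{-1}$, and
\[
\int_N(\cdots)\;=\;1-\frac{1}{1+D}\;=\;\frac{D}{1+D}\;=\;D-D^2+D^3-\cdots\,.
\]
Reading each $D^k=(a_1X_1+\cdots+a_nX_n)^k$ as the corresponding intersection product, supported on $S$, this series is precisely $(1+D)^{-1}\cap[D]=s(D,V)=s(S,V)$; when $D=0$ one has $N=\emptyset$ and $s(S,V)=0$.

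The one step that genuinely requires care is the legitimacy of this splitting: neither orthant integral admits a naive term‑by‑term evaluation, since expanding $(1+\sum_ia_iX_i)^{-(n+1)}$ as a geometric series and integrating the resulting monomials over an orthant gives divergent integrals. So I would treat $\int_N(\cdots)$ as an honest convergent improper integral, equal to the difference of two honest convergent integrals that evaluate to elementary rational functions, and only at the very end expand the rational function $D/(1+D)$ into the power series whose terms are the intersection classes. The iterated integration and the comparison with the formula for $s(D,V)$ are then routine.
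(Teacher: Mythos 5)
Your argument is correct and arrives at the same identity $\int_N(\cdots)=D/(1+D)=s(D,V)$ in $A_*S$, but by a genuinely different route. The paper never evaluates the orthant integrals in closed form at this point: it strips the common factor $D$ off the $D_j$ by invoking the residual intersection formula for Segre classes in parallel with the translation identity \eqref{eq:compfor} for the integrals (Corollary~\ref{remdiv}), and is then left with showing that the integral vanishes when the residual monomials have empty intersection, which is exactly Lemma~\ref{supp}. You instead split $\Rbb^n_{\ge 0}$ into $N$ and the translated orthant, compute both orthant integrals by elementary iterated integration (your telescoping computation is correct, and agrees with Lemma~\ref{simplexco} applied to a simplex with a single finite vertex and $n$ infinite ones), and identify $D/(1+D)$ with the classical $c(\cO(D))^{-1}\cap[D]$. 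Your route is more self-contained and elementary --- no $\otimes$ calculus, no appeal to the residual formula; the paper's route reuses two general tools (Corollary~\ref{remdiv} and Lemma~\ref{supp}) that it needs elsewhere anyway, in particular to make sense of the integral as a class in $A_*S$. Your treatment of convergence and of the passage from an identity of real-valued integrals to an identity of rational functions (hence of formal expansions) is sound.

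The one step to tighten is the disposal of the excess region $N\cap(\underline a+\Rbb^n_{\ge 0})$. The claim that each simplex there contributes a class divisible by a product $X_{i_1}\cdots X_{i_r}$ with $X_{i_1}\cap\cdots\cap X_{i_r}=\emptyset$ is true, but ``otherwise $S$ would fail to be a divisor'' is a gesture rather than a proof. The clean justification: the excess region is the translate by $\underline a$ of the Newton region of the residual scheme $R=\bigcap_j(D_j-D)$, which is empty precisely because $\bigcap_j D_j$ equals the divisor $D$; translation preserves the unbounded directions of a simplex, hence the product $\prod_{j\notin J}X_j$ occurring in its contribution via Lemma~\ref{simplexco}; and the convexity argument in the proof of Lemma~\ref{supp}, applied to $N_R$, gives $\bigcap_{j\notin J}X_j\subseteq R=\emptyset$. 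With that supplied your proof is complete, and, like the paper's, it nowhere uses the normal crossings hypothesis.
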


\begin{lemma}\label{industep}
Let $S$ be a r.c.~monomial scheme, and let $\pi:\Til V \to V$
be the blow-up along $X_1\cap X_2$. Then if \eqref{eq:main} holds for $\pi^{-1}(S)$,
then it also holds for $S$.
\end{lemma}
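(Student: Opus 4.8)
The plan is to compare the integral appearing in \eqref{eq:main} for $S$ with the corresponding integral for $\pi^{-1}(S)$, and to show that the push-forward $\pi_*$ carries the latter to the former term by term. By Remark~\ref{nota}, if $S$ is cut out by monomials $D_j=\sum_i a_i^{(j)} X_i$, then $\pi^{-1}(S)$ is cut out by the monomials $(a_1^{(j)}+a_2^{(j)})E+\sum_i a_i^{(j)}\Til X_i$ in the divisors $E,\Til X_1,\dots,\Til X_n$; so the Newton region $\hN$ of $\pi^{-1}(S)$ is obtained from $N$ by the affine-linear change of coordinates $(a_1,\dots,a_n)\mapsto (b_0,b_1,\dots,b_n)=(a_1+a_2,a_1,a_2,\dots,a_n)$ — that is, $\hN$ is the image of $N$ under the graph of $b_0=a_1+a_2$, sitting in the hyperplane $\{b_0=b_1+b_2\}\subset\Rbb^{n+1}$. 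First I would write the right-hand side of \eqref{eq:main} for $\pi^{-1}(S)$ as an integral over this region in the variables $b_0,b_1,\dots,b_n$ with the divisor parameters $E,\Til X_1,\dots,\Til X_n$; the integrand is $(n+1)!\,E\Til X_1\cdots\Til X_n\,db_0\cdots db_n/(1+b_0E+\sum b_i\Til X_i)^{n+2}$, now an $(n+1)$-fold integral over an $n$-dimensional region, which must first be set up carefully (the region is a polytope complex in the hyperplane $b_0=b_1+b_2$, and one integrates the $(n+1)$-form restricted to it).

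Next I would apply the projection formula to each monomial term of the resulting power series. The key numerical input is collected in Remark~\ref{nota}: in $A_*\Til V$ one has $\Til X_1\cdot\Til X_2=0$, $E\cdot E=-E\cdot(\Til X_1+\Til X_2)$ (the self-intersection formula for the blow-up of a codimension-$2$ center), and the push-forward rules $\pi_*(\Til X_1\cdot\Til X_2)=0$, $\pi_*(E\cdot\Til X_2)=X_1\cdot X_2$, $\pi_*(E^k\cdot(\text{stuff}))$ computed by iterating the self-intersection formula, while $\pi_*$ is the identity on classes pulled back from $V$. Using these, every monomial $E^{k_0}\Til X_1^{k_1}\cdots\Til X_n^{k_n}$ with $k_1,k_2\le 1$ (the only ones that survive, since $\Til X_i^2=0$ for the proper transforms meeting the exceptional divisor transversally in the snc configuration, and $\Til X_1\Til X_2=0$) pushes forward to an explicit monomial in $X_1,\dots,X_n$. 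The claim to verify is then a purely combinatorial identity: the integral over $\hN$ of the pushed-forward integrand equals the integral over $N$ of the original integrand. The natural way to see this is to triangulate: subdivide $\hN$ (equivalently $N$) into simplicial cones/pieces on which both integrands are monomial differential forms, compute each piece as a ``volume'' integral of the form $\int 1/(1+\ell)^{n+1}$ over a simplex with $\ell$ linear, and match the two triangulations. The blow-up corresponds on the level of Newton regions to the elementary operation of slicing along $\{a_1+a_2 = \text{const}\}$ hyperplanes, so the triangulation of $\hN$ refines that of $N$ in a controlled way.

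The main obstacle, and the technical heart of the argument, is the last step: proving that $\pi_*$ applied to the $\hN$-integral equals the $N$-integral. This is essentially a change-of-variables statement for these rational integrals under the substitution $E\rightsquigarrow$ (a combination dictated by the self-intersection formula) together with the geometric change $N\to\hN$, and the factors of $n!$ versus $(n+1)!$ and the extra integration variable $b_0$ have to conspire correctly. I expect the cleanest route is to fix a common triangulation of both Newton regions adapted to the hyperplanes $\{b_0=b_1+b_2\}$ and to the facets of the orthants, evaluate the contribution of a single top-dimensional simplex on each side in closed form (an elementary integral, as in the computation that proves Lemma~\ref{prince}), and check the two contributions agree after applying the Remark~\ref{nota} push-forward formulas — the point being that a simplex of $\hN$ and its image simplex in $N$ are related by exactly the projection $(b_0,b_1,\dots,b_n)\mapsto(b_1,b_2,\dots,b_n)$, whose fibers are the segments along which $E$ ``integrates out'' to reproduce the $E^k$-push-forward combinatorics. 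Once this single-simplex identity is in hand, summing over the triangulation and invoking linearity of $\pi_*$ finishes the proof; combined with Lemma~\ref{prince} and Goward's theorem (birational invariance of Segre classes reducing the general case to the principal one through a sequence of such blow-ups), this yields Theorem~\ref{main}.
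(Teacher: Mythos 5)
Your overall strategy --- push the integral for $\pi^{-1}(S)$ forward term by term via the projection formula, using a triangulation to organize the computation --- is the same as the paper's, but the proposal contains a genuine misconception and omits the step that is the actual content of the proof. First, your description of $\hN$ is wrong: the Newton region of $\pi^{-1}(S)$ is a full $(n+1)$-dimensional subset of $\Rbb^{n+1}_{\ge 0}$ (the complement of the convex hull of the translated positive orthants at the lifted lattice points $\huv_j$), not the image of $N$ under the graph of $b_0=a_1+a_2$. Only the generating vertices $\huv_j$ lie on the hyperplane $H=\{b_0=b_1+b_2\}$; the region itself does not, and $\int_{\hN}$ is an honest $(n+1)$-fold integral, not an $(n+1)$-form restricted to an $n$-dimensional polytope complex (which would vanish). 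Consequently the two triangulations are not related by ``slicing'' or refinement: the paper triangulates the $(n+1)$-dimensional region $\hN'$ by lifting a triangulation of the convex hull of the finite vertices to $H$ and successively coning with apexes $\ua_1$, $\ua_2$, $\ua_0$, and then recovers the triangulation of $N'$ as the link of $\ua_0$.

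Second, the matching of per-simplex contributions is not the routine bookkeeping you describe, and the tools you propose would not carry it out. The paper's key observation is that since each finite vertex $\huv$ satisfies $v_0=v_1+v_2$, every denominator factor $(1+\huv\cdot\Til\uX)$ is a pullback $\pi^*(1+\uv\cdot\uX)$; by the projection formula only the numerator --- a square-free product of the divisors corresponding to the finite directions of the simplex --- needs pushing forward, so the self-intersection formula for $E$ is never invoked. Your plan to expand into monomials $E^{k_0}\Til X_1^{k_1}\cdots$ and iterate $E^2=-E(\Til X_1+\Til X_2)$ is both unnecessary and rests on the false claim that $\Til X_i^2=0$ (only $\Til X_1\cdot\Til X_2=0$ holds, because the proper transforms are disjoint after the blow-up). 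More importantly, the correspondence between simplices is genuinely subtle --- the paper explicitly flags that the natural bijection from the simplices of $\hcU$ containing $\ua_0$ to the simplices of $\cU$ is \emph{not} compatible with push-forward. The pyramids with apex $\ua_0$ over top-dimensional lifted simplices push forward to $0$ (their numerators contain $\Til X_1\cdot\Til X_2$), and their contributions are instead supplied by the pyramids with apex $\ua_1$ over the same simplices, whose numerators contain $E\cdot\Til X_2$ and push forward to $X_1\cdot X_2$. Constructing this re-routed bijection $\alpha$ on $\hU'\amalg\hU_1$ and checking the resulting four-case analysis is the technical heart of the argument, and it is precisely the part your proposal leaves as ``match the two triangulations.''
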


The proofs of these two lemmas are given in the rest of this section, after some needed
preliminaries. As pointed out above, these two lemmas imply Theorem~\ref{main}.

\subsection{Integrals and triangulations}\label{intra}
The integral appearing in \eqref{eq:main} may be computed in terms of a triangulation
of $N$. An $n$-dimensional simplex in $\Rbb^n$ is the convex hull of a set of $n+1$ 
points (its vertices) not contained in a hyperplane. Points are denoted by underlined
letters: $\uv=(v_1,\dots, v_n)$. The notation $\uv\cdot \uX$ stands for 
$v_1 X_1+\cdots + v_n X_n$. We let $\ue_1=(1,0,\dots, 0)$, \dots, $\ue_n=(0,\dots, 0, 1)$.

We also allow for the possibility that some of the vertices are at infinity, and
we denote by $\ua_i$ the point at infinity in the direction of $\ue_i$. Thus, the
simplex $T$ with `finite' vertices $\uv_0,\dots, \uv_r$ and `infinite' vertices
$\ua_{i_1},\dots, \ua_{i_{n-r}}$ is defined by
\[
T = \left\{\sum_{j=0}^r \lambda_j \uv_j+ \sum_{k=1}^{n-r} \mu_k \ue_{i_k}\quad | \quad
\text{$\forall j,k: \lambda_j\ge 0, \mu_k\ge 0$, and $\sum_j \lambda_j=1$}\right\}\quad.
\]
Each simplex $T$ has a normalized volume $\hVol(T)$, defined as the normalized 
volume of the (finite) simplex obtained by projecting along its infinite directions. 
(The {\em normalized\/} volume is the ordinary Euclidean volume times the factorial
of the dimension.)

\begin{example}\label{ex:colum}
The simplex $T$ with vertices $\uv_0=(0,0,1), \uv_1=(1,0,2), \uv_2= (0,2,3)$ 
and $\underline a_3$ (at infinity)
\begin{center}
\includegraphics[scale=.5]{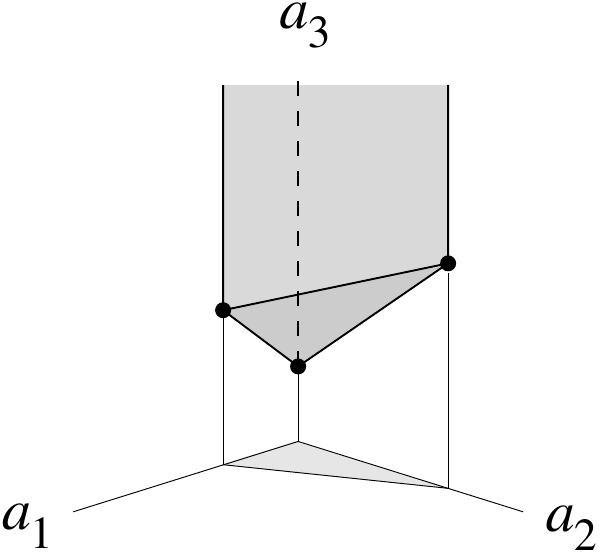}
\end{center}
has normalized volume $\hVol(T)=2$.
\qede\end{example}

We can associate with every simplex a contribution to the integral in~\eqref{eq:main}:

\begin{lemma}\label{simplexco}
If $T$ has finite vertices $\uv_0,\dots, \uv_r$ and infinite vertices
$\ua_{i_1},\dots, \ua_{i_{n-r}}$, then
\[
\int_T \frac{n! X_1\cdots X_n\, da_1\cdots da_n}
{(1+a_1 X_1+\cdots +a_n X_n)^{n+1}}
=\frac{\hVol(T)\, X_1\cdots X_n}
{\prod_{\ell=0}^r (1+\uv_\ell\cdot \uX) \prod_{j=1}^{n-r} X_{i_j}}\quad.
\]
\end{lemma}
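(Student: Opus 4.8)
The plan is to reduce the statement to the case in which all $n+1$ vertices of $T$ are finite — where the identity is the classical Feynman parametrization formula — by explicitly integrating out the infinite directions. Throughout one treats $X_1,\dots,X_n$ as positive real parameters, so that every integral converges; the resulting equality of rational functions then holds formally, as required. The first step is a change of variables. A point of $T$ is $\ua=\sum_{\ell=0}^{r}\lambda_\ell\uv_\ell+\sum_{j=1}^{n-r}\mu_j\ue_{i_j}$ with $\lambda_\ell\ge 0$, $\sum_\ell\lambda_\ell=1$, $\mu_j\ge 0$, and I would take $(\lambda_1,\dots,\lambda_r,\mu_1,\dots,\mu_{n-r})$ as coordinates, with $\lambda_0=1-\lambda_1-\cdots-\lambda_r$. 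The Jacobian of $\ua$ with respect to these variables is $\det[\,\uv_1-\uv_0,\dots,\uv_r-\uv_0,\ue_{i_1},\dots,\ue_{i_{n-r}}\,]$; expanding along the unit columns $\ue_{i_j}$ identifies its absolute value with the $r\times r$ determinant built from $\uv_1-\uv_0,\dots,\uv_r-\uv_0$ after deleting the rows $i_1,\dots,i_{n-r}$ — that is, with $\hVol(T)$, by the definition of the normalized volume as that of the projection along the infinite directions. (This is where it matters that $T$ is an honest simplex: the projected vertices are affinely independent, so the map is nonsingular.) Hence $da_1\cdots da_n=\hVol(T)\,d\lambda_1\cdots d\lambda_r\,d\mu_1\cdots d\mu_{n-r}$, and since $\sum_\ell\lambda_\ell=1$ one has the algebraic simplification $1+\ua\cdot\uX=\sum_{\ell=0}^r\lambda_\ell(1+\uv_\ell\cdot\uX)+\sum_{j=1}^{n-r}\mu_j X_{i_j}$; set $w_\ell:=1+\uv_\ell\cdot\uX$.

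Next I would integrate out $\mu_1,\dots,\mu_{n-r}$ one at a time over $[0,\infty)$, using $\int_0^\infty(C+\mu X)^{-m}\,d\mu=\frac1{(m-1)X}\,C^{-(m-1)}$. Starting from exponent $n+1$, the $n-r$ integrations lower the exponent to $r+1$ and accumulate the factor $\frac{r!}{n!}\cdot(X_{i_1}\cdots X_{i_{n-r}})^{-1}$, leaving $(\lambda_0w_0+\cdots+\lambda_rw_r)^{-(r+1)}$. Collecting constants, the integral becomes
\[
r!\,\hVol(T)\,\frac{X_1\cdots X_n}{X_{i_1}\cdots X_{i_{n-r}}}\int_{\Delta_r}\frac{d\lambda_1\cdots d\lambda_r}{(\lambda_0 w_0+\cdots+\lambda_r w_r)^{r+1}}\,,
\]
where $\Delta_r=\{\lambda_\ell\ge 0,\ \lambda_1+\cdots+\lambda_r\le 1\}$ and $\lambda_0=1-\lambda_1-\cdots-\lambda_r$.

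It remains to establish the Feynman–Dirichlet identity $\int_{\Delta_r}(\lambda_0w_0+\cdots+\lambda_r w_r)^{-(r+1)}\,d\lambda_1\cdots d\lambda_r=(r!\,w_0w_1\cdots w_r)^{-1}$; substituting it into the display gives exactly the right-hand side of the Lemma, since $w_0\cdots w_r=\prod_{\ell=0}^r(1+\uv_\ell\cdot\uX)$ and $X_1\cdots X_n/(X_{i_1}\cdots X_{i_{n-r}})$ is the product of the remaining $X_k$'s. I would prove the identity by induction on $r$, the case $r=0$ being trivial. For the inductive step, put $s=1-\lambda_1-\cdots-\lambda_{r-1}$ and integrate out $\lambda_r\in[0,s]$: writing $\lambda_0 w_0+\cdots+\lambda_r w_r=B+\lambda_r(w_r-w_0)$ with $B=\lambda_1w_1+\cdots+\lambda_{r-1}w_{r-1}+sw_0$, the $\lambda_r$-integral equals $\frac1{r(w_r-w_0)}\bigl(B^{-r}-(B')^{-r}\bigr)$, where $B'=\lambda_1w_1+\cdots+\lambda_{r-1}w_{r-1}+sw_r$. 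Now $B$ and $B'$ are the linear forms attached to $(r-1)$-simplices with weights $(w_0,w_1,\dots,w_{r-1})$ and $(w_r,w_1,\dots,w_{r-1})$, so the inductive hypothesis evaluates $\int_{\Delta_{r-1}}B^{-r}$ and $\int_{\Delta_{r-1}}(B')^{-r}$; their difference produces $\frac1{w_0}-\frac1{w_r}=\frac{w_r-w_0}{w_0w_r}$, which cancels the $w_r-w_0$ and yields $(r!\,w_0\cdots w_r)^{-1}$.

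All of this is elementary: the only genuine input is the Feynman integral (itself proved by the same one-variable recursion), and the single point deserving care is the Jacobian/volume bookkeeping in the first step — specifically, checking that the $r\times r$ minor picked out by deleting the coordinates $i_1,\dots,i_{n-r}$ is precisely the one computing $\hVol(T)$. I do not anticipate any real obstacle beyond keeping the combinatorial constants $\tfrac{r!}{n!}$ straight through the successive integrations.
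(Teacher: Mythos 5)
Your proof is correct. Note, though, that the paper does not actually prove this lemma in situ: its entire proof is the citation ``This is Proposition~3.1 in~\cite{Scms}.'' What you have written is a complete, self-contained verification of that outsourced statement, and it follows what is essentially the standard route (and, in substance, the route taken in the cited reference): pass to barycentric coordinates $\lambda_\ell$ on the finite part of $T$ together with the ray parameters $\mu_j$ along the infinite directions, identify the Jacobian with the $r\times r$ minor computing $\hVol(T)$, integrate out the $\mu_j$ one at a time to strip off the factors $X_{i_j}$ and lower the exponent from $n+1$ to $r+1$, and finish with the Feynman--Dirichlet identity $\int_{\Delta_r}(\sum_\ell\lambda_\ell w_\ell)^{-(r+1)}=(r!\,w_0\cdots w_r)^{-1}$ proved by the one-variable recursion. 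All the bookkeeping checks out: the accumulated constant $\tfrac{r!}{n!}$ cancels against the $n!$ in the integrand and the $r!$ from the Dirichlet identity, and the Jacobian claim is right since $\hVol$ is defined as $r!$ times the Euclidean volume of the projected simplex, which is exactly the absolute value of the deleted-rows minor. Two small points worth a sentence in a final write-up: the inductive step divides by $w_r-w_0$, so the case $w_r=w_0$ should be handled by continuity (or by noting the identity is one of rational functions holding on a dense set); and the convergence of the $\mu_j$-integrals uses that the $X_i$ are taken as positive reals and that $1+\uv_\ell\cdot\uX>0$, which holds since the $\uv_\ell$ have nonnegative coordinates --- you address both adequately.
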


\begin{proof}
This is Proposition~3.1 in~\cite{Scms}.
\end{proof}

Note that the numerator simplifies to give a multiple of the product of the parameters
$X_i$ corresponding to the `finite' part of the simplex.

\begin{example}
For the simplex in Example~\ref{ex:colum}, 
\begin{multline*}
\int_T \frac{3! X_1X_2 X_3\, da_1 da_2 da_3}
{(1+a_1 X_1+\cdots +a_n X_n)^4}
=\frac{\hVol(T)\, X_1X_2 X_3}
{(1+X_3)(1+X_1+2X_3)(1+2X_2+3X_3)\, X_3}\\
=\frac{2\, X_1X_2}
{(1+X_3)(1+X_1+2X_3)(1+2X_2+3X_3)}
\quad.
\end{multline*}
\end{example}

It immediately follows from Lemma~\ref{simplexco} that the integral over the whole positive
orthant $\Rbb^n_{\ge 0}$ equals~$1$. Also:

\begin{corol}\label{pows}
For every monomial scheme $S$, the integral
\[
\int_N \frac{n! X_1\cdots X_n\, da_1\cdots da_n}{(1+a_1 X_1+\cdots +a_n X_n)^{n+1}}
\]
is a rational function in $X_1,\dots,X_n$. It may be expanded as a power series in 
$X_1,\dots, X_n$ with integer coefficients.
\end{corol}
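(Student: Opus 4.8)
The plan is to reduce the statement to Lemma~\ref{simplexco} by triangulating the Newton region $N$ into finitely many simplices of the type considered there. First I would recall that $N$ is by definition the complement, in $\Rbb^n_{\ge 0}$, of the convex hull of finitely many translated orthants $\ua^{(k)}+\Rbb^n_{\ge 0}$. This complement is a bounded-in-each-slice region that can be written as a finite union of (closed) $n$-dimensional boxes, or more economically, decomposed into a finite fan of simplices, each of which has some finite vertices (lattice points among the $\ua^{(k)}$ and the origin and their coordinatewise minima) and, crucially, some vertices at infinity in the coordinate directions. Indeed, the unbounded directions of $N$ are exactly coordinate directions: if a point $\uv$ lies in $N$, moving toward a large coordinate stays in $N$ until one hits the staircase, so the ``far'' part of $N$ along each facet is a product of a bounded region with a coordinate ray. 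A standard triangulation of the staircase complement therefore produces finitely many simplices $T_1,\dots,T_m$, each of the form covered by the notation in \S\ref{intra}: finite vertices $\uv_0,\dots,\uv_r$ and infinite vertices $\ua_{i_1},\dots,\ua_{i_{n-r}}$.

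Next I would invoke Lemma~\ref{simplexco} on each $T_s$ and sum. Additivity of the integral over a triangulation (overlaps have measure zero, so they contribute nothing) gives
\[
\int_N \frac{n!\,X_1\cdots X_n\,da_1\cdots da_n}{(1+a_1X_1+\cdots+a_nX_n)^{n+1}}
=\sum_{s=1}^m \frac{\hVol(T_s)\,X_1\cdots X_n}{\prod_{\ell}(1+\uv^{(s)}_\ell\cdot\uX)\prod_{j}X_{i^{(s)}_j}}\quad.
\]
Each summand is a rational function in $X_1,\dots,X_n$, hence so is the finite sum: this proves the first assertion. For the second assertion I need each summand — or at least the total — to have a power series expansion with integer coefficients. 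Here the key observation is the simplification already flagged after Lemma~\ref{simplexco}: the factors $X_{i_j}$ in the denominator are cancelled by the corresponding factors in the numerator $X_1\cdots X_n$, so each summand is in fact
\[
\hVol(T_s)\cdot \frac{\prod_{i\notin\{i_1,\dots,i_{n-r}\}}X_i}{\prod_{\ell=0}^{r}(1+\uv^{(s)}_\ell\cdot\uX)}\quad.
\]
Since $\hVol(T_s)$ is a nonnegative integer (normalized volume of a lattice simplex — and all the finite vertices I constructed are lattice points, being the $\ua^{(k)}$, the origin, and their coordinatewise minima), and since $1/(1+\uv\cdot\uX)=\sum_{d\ge 0}(-1)^d(\uv\cdot\uX)^d$ expands with integer coefficients, every summand expands as a power series with integer coefficients; the finite sum does too.

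The main obstacle is the triangulation step: I must verify that $N$ admits a finite decomposition into simplices of exactly the form allowed in \S\ref{intra}, with all finite vertices at lattice points so that $\hVol$ is a nonnegative integer, and with the unbounded directions genuinely coordinate directions $\ue_i$ rather than arbitrary rays. The point that makes this work is structural: the staircase region $N$ is a finite union of translated coordinate boxes of the form $\prod_i[0,b_i]$ with some $b_i=+\infty$; each such box is itself triangulable into lattice simplices of the required shape (e.g.\ by the standard ``order complex'' / staircase triangulation of a product of intervals, extended to the half-open ones by sending the relevant coordinate to its point at infinity), and one then refines these into a common triangulation of the union. A secondary, more bookkeeping-level point is checking that lattice simplices genuinely have integer normalized volume even in the presence of infinite vertices — but by definition $\hVol(T)$ is computed after projecting away the infinite directions, reducing to the classical fact that a lattice simplex in $\Rbb^r$ has normalized volume in $\Zbb_{\ge 0}$. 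Once the triangulation is in hand, everything else is the formal manipulation sketched above.
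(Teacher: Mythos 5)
Your proposal is correct and follows essentially the same route as the paper, whose entire proof is ``Triangulate $N$, then apply Lemma~\ref{simplexco}.'' The extra details you supply --- that a lattice triangulation with coordinate-direction infinite vertices exists, that $\hVol$ of a lattice simplex is a nonnegative integer, and that $1/(1+\uv\cdot\uX)$ expands with integer coefficients --- are exactly the points the paper leaves implicit, and they check out.
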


\begin{proof}
Triangulate $N$, then apply Lemma~\ref{simplexco}.
\end{proof}

\subsection{Residual intersection}
Let $S$ be a monomial scheme, realized as the intersection of monomials $D_1,\dots, D_r$.
Assume all $D_i$'s contain a fixed monomial divisor $D=\sum_i d_i X_i$; we obtain a
{\em residual\/} monomial scheme $R$ by intersecting the residuals to $D$ in $D_i$:
\[
R:= (D_1-D) \cap \cdots \cap (D_r-D)\quad.
\]
The {\em residual intersection\/} formula in intersection theory (cf.~\cite{85k:14004}, 
Proposition 9.2) gives a relation between the Segre classes of $S$, $D$, and $R$.
This formula should be expected to have a counterpart in terms of integrals.

\begin{lemma}
With $S$, $D$, $R$ as above:
\begin{itemize}
\item The Newton region $N_S$ for $S$ is the intersection of the positive orthant 
with the translate by $(d_1,\dots,d_n)$ of the Newton region $N_R$ for $R$:
\[
N_R=\{ (v_1,\dots, v_n)\in \Rbb^n_{\ge 0} \, | \, (v_1+d_1,\dots, v_r+d_r)\in N_S\}\quad.
\]
\item We have the equality
\begin{multline}\label{eq:compfor}
\int_{N_S} \frac{n! X_1\cdots X_n\, da_1\cdots da_n}{(1+a_1 X_1+\cdots +a_n X_n)^{n+1}}\\
=\frac{\sum_i d_i X_i}{(1+\sum_i d_i X_i)} + \frac 1{(1+\sum_i d_i X_i)} \left(
\int_{N_R} \frac{n! X_1\cdots X_n\, da_1\cdots da_n}{(1+a_1 X_1+\cdots +a_n X_n)^{n+1}}
\otimes \cO(\sum_i d_i X_i)\right)\quad.
\end{multline}
\end{itemize}
\end{lemma}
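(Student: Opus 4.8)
The plan is to prove the two bullet points separately, starting with the description of the Newton region, which is essentially bookkeeping, and then deducing the integral identity \eqref{eq:compfor} by an explicit change of variables.

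First I would unwind the definitions. Writing $\ud = (d_1,\dots,d_n)$, each monomial $D_i = \sum_i c_{ij} X_j$ with $c_{ij} \ge d_j$ corresponds to the lattice point $(c_{i1},\dots,c_{in})$, and $D_i - D$ corresponds to $(c_{i1}-d_1,\dots,c_{in}-d_n)$. The Newton region is by definition the complement (inside $\Rbb^n_{\ge 0}$) of the convex hull of the translated positive orthants at these points; since translating every generating point by $-\ud$ translates the whole convex hull by $-\ud$, the region $N_R$ is (the part in the positive orthant of) $N_S - \ud$. This is exactly the displayed formula for $N_R$, so the first bullet is immediate once the definitions are spelled out. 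The only mild subtlety is that $N_R$ must be intersected with $\Rbb^n_{\ge 0}$, which the statement already records.

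For the integral identity, I would split the domain $N_S$ into two pieces: the ``box'' $B = \{0 \le a_j \le d_j\}$ (more precisely $N_S \cap B$, but since every $D_i \supseteq D$ this box lies entirely in $N_S$), and the complement $N_S \setminus B$. On the box, the integral is $\int_B$ of the standard form, which by Lemma~\ref{simplexco} (triangulating the box, or directly integrating) evaluates to $1 - \prod_i \frac{1}{1+d_i X_i}$; but one must be careful — the correct elementary computation of $\int_{[0,d_1]\times\cdots\times[0,d_n]}$ of the form $\frac{n! X_1\cdots X_n\,da}{(1+\sum a_jX_j)^{n+1}}$ gives precisely $\frac{\sum_i d_i X_i + (\text{lower order})}{1+\sum_i d_i X_i}$... actually the clean statement is that $\int_{\Rbb^n_{\ge 0}} = 1$ and the integral over the translated orthant $\ud + \Rbb^n_{\ge 0}$ equals $\frac{1}{1+\sum_i d_i X_i}\cdot\big(\text{something}\big)$. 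The key computation is: on $N_S \setminus B = \ud + N_R$ (up to the orthant boundary issue), substitute $a_j = b_j + d_j$. Then $1 + \sum a_j X_j = (1+\sum d_j X_j) + \sum b_j X_j = (1+\sum d_j X_j)\big(1 + \sum b_j \frac{X_j}{1+\sum d_j X_j}\big)$, so the integrand becomes $\frac{1}{1+\sum d_j X_j}$ times the standard integrand in the $b_j$ with each $X_j$ replaced by $X_j/(1+\sum d_i X_i)$ — and the prefactor $X_1\cdots X_n$ contributes another $(1+\sum d_i X_i)^{-n}$ which combined with $(1+\sum d_jX_j)^{-(n+1)}\cdot(1+\sum d_jX_j)^{n+1}$ from denominator expansion... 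I would track these powers carefully. The net effect is exactly the operation $(-)\otimes\cO(\sum_i d_i X_i)$ familiar from \cite{85k:14004}: replacing a term of degree (codimension) $k$ by that term times $(1+\sum d_i X_i)^{-k}$. Thus $\int_{\ud + N_R}$ of the form equals $\frac{1}{1+\sum_i d_i X_i}\big(\int_{N_R}(\cdots)\otimes\cO(\sum d_iX_i)\big)$, and $\int_{N_S} = \int_B + \int_{N_S\setminus B}$ yields \eqref{eq:compfor} once $\int_B = \frac{\sum d_iX_i}{1+\sum d_iX_i}$ is confirmed — note $\int_B = \int_{\Rbb^n_{\ge 0}} - \int_{\ud+\Rbb^n_{\ge 0}} = 1 - \frac{1}{1+\sum d_iX_i}\cdot 1 = \frac{\sum d_iX_i}{1+\sum d_iX_i}$ using that the $\otimes$ operation fixes the degree-$0$ term $1$.

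The main obstacle I anticipate is purely formal: getting the power-of-$(1+\sum d_iX_i)$ bookkeeping to match the definition of the Segre-class twist $\otimes\cO(\sum d_iX_i)$ exactly, including checking that the boundary-of-orthant set where $N_S \setminus B$ fails to equal the honest translate $\ud + N_R$ has measure zero and so does not affect the integral. Everything else — the change of variables, the evaluation of $\int_B$, and the identification with the residual formula of \cite{85k:14004} — should be routine once the substitution $a_j = b_j + d_j$ is in place and Lemma~\ref{simplexco} (or a direct antiderivative) is invoked.
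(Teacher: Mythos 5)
Your handling of the first bullet and your change-of-variables computation are both fine: the substitution $a_j=b_j+d_j$ does turn the integrand into $\frac{1}{1+\sum_i d_iX_i}$ times the standard integrand in the rescaled variables $X_j/(1+\sum_i d_iX_i)$, which is exactly the $\otimes\,\cO(\sum_i d_iX_i)$ twist; this is the same key step as in the paper (which invokes Proposition~1 of \cite{MR96d:14004} for it). The genuine gap is in your decomposition of the domain. You split $N_S$ as $B\sqcup(N_S\setminus B)$ with $B=\prod_j[0,d_j]$ and assert that $N_S\setminus B$ agrees with the translate of $N_R$ by $(d_1,\dots,d_n)$ up to a measure-zero set. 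For $n\ge 2$ this is false: the set $N_S\setminus\left(B\cup\left((d_1,\dots,d_n)+\Rbb^n_{\ge0}\right)\right)$ typically has positive, even infinite, measure. For instance with $n=2$, $D=X_1+X_2$ and the single monomial $D_1=10X_1+10X_2$, the strip $\{0\le a_1<1,\ a_2>1\}$ lies in $N_S$ but in neither $[0,1]^2$ nor $(1,1)+N_R$. For the same reason your evaluation $\int_B=1-\int_{(d_1,\dots,d_n)+\Rbb^n_{\ge0}}=\frac{\sum_i d_iX_i}{1+\sum_i d_iX_i}$ is wrong: $B$ is not the complement of the translated orthant in $\Rbb^n_{\ge0}$, and the honest integral over the box is a multiple of $X_1\cdots X_n$, so it begins in degree $n$, not degree $1$. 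Thus both of your intermediate identities fail individually; your argument only lands on the correct total because the two errors cancel, which is not a proof.

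The repair, which is also the paper's actual route, is to work with complements in the positive orthant. Since every generator of $S$ contains $D$, the complement $N'_S$ of $N_S$ in $\Rbb^n_{\ge0}$ is \emph{exactly} the translate of $N'_R$ by $(d_1,\dots,d_n)$, with no boundary caveats; and since the integral over the whole orthant is $1$, the identity \eqref{eq:compfor} is equivalent to $\int_{N'_S}=\frac{1}{1+\sum_i d_iX_i}\left(\int_{N'_R}\otimes\,\cO(\sum_i d_iX_i)\right)$, which your change of variables then proves verbatim. Equivalently, keep your two-piece decomposition but replace the box $B$ by $\Rbb^n_{\ge0}\setminus\left((d_1,\dots,d_n)+\Rbb^n_{\ge0}\right)$, which genuinely lies in $N_S$, has complement $ (d_1,\dots,d_n)+N_R$ inside $N_S$, and has integral $1-\frac{1}{1+\sum_i d_iX_i}$.
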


The notation introduced in~\cite{MR96d:14004}, \S2, is used in this statement (and will be
used in the following): for a line bundle $\cL$ and a class $A=\sum_i a^{(i)}$ in the Chow group, 
where $a^{(i)}$ has codimension~$i$ in the ambient scheme $V$, $A\otimes \cL$ denotes the 
class $\sum_i c(\cL)^{-i}\cap a^{(i)}$. This notation determines an action of Pic on the Chow
group, and is compatible with the effect of ordinary
tensors on Chern classes, cf.~Propositions~1 and~2 in~\cite{MR96d:14004}.

\begin{proof}
The first assertion is immediate.

For the second, note that the {\em complement\/} $N'_S$ of $N_S$ in the positive orthant 
is precisely the translate of the complement $N'_R$ by $(d_1,\dots,d_n)$. Since the
integral over the positive orthant is $1$, verifying the stated formula is equivalent to
verifying that
\begin{multline}\label{eq:compforc}
\int_{N'_S} \frac{n! X_1\cdots X_n\, da_1\cdots da_n}{(1+a_1 X_1+\cdots +a_n X_n)^{n+1}}\\
=\frac 1{(1+\sum_i d_i X_i)} \left(
\int_{N'_R} \frac{n! X_1\cdots X_n\, da_1\cdots da_n}{(1+a_1 X_1+\cdots +a_n X_n)^{n+1}}
\otimes \cO(\sum_i d_i X_i)\right)\quad.
\end{multline}
By the formal properties of the $\otimes$ operation (cf.~\cite{MR96d:14004}, Proposition~1)
\[
\frac{X_1\cdots X_n}{(1+a_1 X_1+\cdots +a_n X_n)^{n+1}}
\otimes \cO(\sum_i d_i X_i)
=\frac{(1+\sum_i d_i X_i)\, X_1\cdots X_n}{(1+(a_1+d_1) X_1+\cdots +(a_n+d_n) X_n)^{n+1}},
\]
showing that the right-hand side of~\eqref{eq:compforc} equals
\[
\int_{N'_R} \frac{n! X_1\cdots X_n\, da_1\cdots da_n}
{(1+(a_1+d_1) X_1+\cdots +(a_n+d_n) X_n)^{n+1}}\quad.
\]
It is now clear that this equals the left-hand side, since $N'_S$ is the translate of $N'_R$
by $(d_1,\dots, d_n)$.
\end{proof}

\begin{corol}\label{remdiv}
With notation as above, formula \eqref{eq:main} is true for $S$ in $A_*S$ if and only if it is 
true for $R$.
\end{corol}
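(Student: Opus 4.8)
The plan is to deduce this corollary from the identity~\eqref{eq:compfor} of the previous lemma together with the residual intersection formula of~\cite{85k:14004} recalled above. Since $D=\sum_i d_iX_i$ is an effective Cartier divisor contained in $S$, and $R$ is the residual scheme to $D$ in $S$ --- so that $\mathcal I_S=\mathcal I_D\cdot\mathcal I_R$, which one checks at once on monomial ideals, and in particular $R\subseteq S$ --- Proposition~9.2 in~\cite{85k:14004} yields
\[
s(S,V)=s(D,V)+c(\cO(D))^{-1}\cap\bigl(s(R,V)\otimes\cO(D)\bigr)
\]
as an identity in $A_*S$, the first summand being a class supported on $D$ and the second the image of a class supported on $R$. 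The key observation is that this has exactly the shape of the right-hand side of~\eqref{eq:compfor}.

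To turn the observation into a proof I would first match the two summands. Because $D$ is Cartier, $s(D,V)=[D]-D\cdot[D]+D^2\cdot[D]-\cdots$, so interpreting the monomials $D^k$ as intersection products in $V$ gives precisely the class represented by $\dfrac{\sum_i d_iX_i}{1+\sum_i d_iX_i}$; and $c(\cO(D))^{-1}=\dfrac1{1+\sum_i d_iX_i}$. As the operation $-\otimes\cO(D)$ in~\eqref{eq:compfor} is the one of~\cite{MR96d:14004}, \S2, also used in the residual formula, substituting~\eqref{eq:main} for $R$ (that is, replacing $\int_{N_R}(\cdots)$ by $s(R,V)$) into~\eqref{eq:compfor} converts its right-hand side into that of the displayed residual formula, hence gives~\eqref{eq:main} for $S$. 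The converse is obtained by reading the argument backwards: the assignment $\gamma\mapsto s(D,V)+c(\cO(D))^{-1}\cap(\gamma\otimes\cO(D))$ is a bijection, since its linear part is unipotent with respect to the dimension grading; thus~\eqref{eq:compfor} and the residual formula express $\int_{N_S}(\cdots)$ and $s(S,V)$ as the value of one and the same bijection at $\int_{N_R}(\cdots)$ and at $s(R,V)$, so equality holds on one side precisely when it holds on the other.

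I expect the only delicate point, and hence the main --- though fairly modest --- obstacle, to be the bookkeeping of Chow groups needed to make the equivalence sharp, namely between~\eqref{eq:main} for $S$ in $A_*S$ and~\eqref{eq:main} for $R$ in $A_*R$. One must verify that the term-by-term interpretation of the two halves of~\eqref{eq:compfor} is exactly the refined one furnished by Proposition~9.2 of~\cite{85k:14004}: the first half a class supported on $D\subseteq S$, the second half the push-forward to $S$ of a class on $R$. For this I would invoke Lemma~\ref{supp}, applied to both $S$ and $R$ (each term of the relevant integral is supported on a subscheme of the scheme in question), together with the refined form of the residual intersection formula. No further geometric input is required, and in particular the simple normal crossings hypothesis is not used in this step.
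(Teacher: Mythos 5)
Your proposal is correct and takes essentially the same approach as the paper: the paper's entire proof is the instruction to compare \eqref{eq:compfor} with the residual intersection formula for Segre classes in the $\otimes$-form of \cite{MR96d:14004}, Proposition~3, which is precisely the term-by-term matching you carry out. Your added details --- identifying $s(D,V)$ with $\frac{\sum_i d_iX_i}{1+\sum_i d_iX_i}$, the unipotence argument for the ``if and only if,'' and the bookkeeping of supports via Lemma~\ref{supp} --- are elaborations the paper leaves implicit, not a different route.
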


\begin{proof}
Compare \eqref{eq:compfor} with the formula for residual intersections of Segre classes,
in the form given in ~\cite{MR96d:14004}, Proposition~3.
\end{proof}

\subsection{Proof of Lemma~\ref{prince}}
Let $D_1,\dots, D_r$ be monomials, and assume $S=D_1\cap \cdots\cap D_r$ is
a divisor. Note that this may happen even if $r> 1$: for example, suppose $X_1,
X_2, X_3$ are divisors meeting with normal crossings, and $X_1\cap X_2=\emptyset$.
If $D_1=X_1+X_3$ and $D_2=X_2+X_3$, then $S=D_1\cap D_2$ is a divisor, in fact, 
$X_3$. However, the Newton region of this representation of $S$ (depicted to the left)
includes an infinite column that is not present in the representation as $X_3$ (on the right).
\begin{center}
\includegraphics[scale=.5]{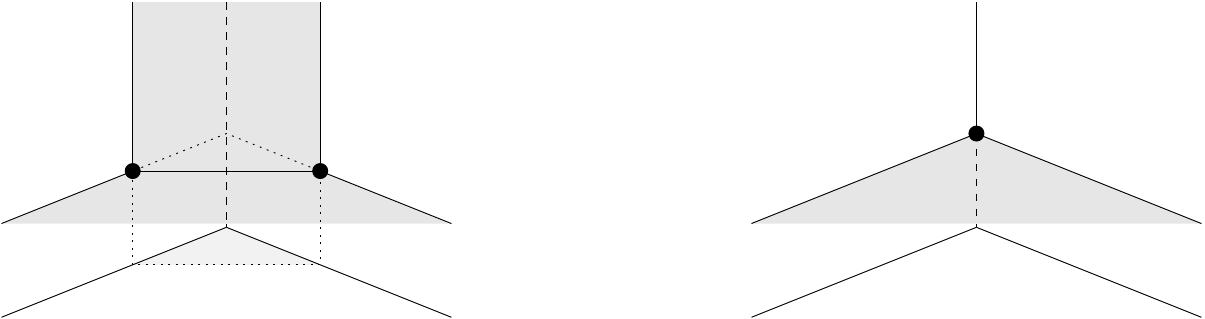}
\end{center}

We have to verify that if $D_1\cap \cdots \cap D_r=D$ is a monomial divisor, then
these two representations lead to the same integral. By Corollary~\ref{remdiv}, we 
may in fact eliminate the common factor $D$ in the monomials
$D_1,\dots, D_r$, and we are reduced to showing that if $D_1,\dots, D_r$ have 
{\em empty\/} intersection, then
\[
\int_N \frac{n! X_1\cdots X_n\, da_1\cdots da_n}
{(1+a_1 X_1+\cdots +a_n X_n)^{n+1}} = 0\quad.
\]
This follows immediately from the following more general statement.

\begin{lemma}\label{supp}
Let $D_1,\dots, D_r$ be monomials in $X_1,\dots, X_n$, and let $S=D_1\cap\cdots\cap D_r$,
with Newton region $N$. Then the class computed by the integral in~\eqref{eq:main},
\[
\int_N \frac{n! X_1\cdots X_n\, da_1\cdots da_n}
{(1+a_1 X_1+\cdots +a_n X_n)^{n+1}}
\]
is supported on $S$.
\end{lemma}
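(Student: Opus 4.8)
The plan is to identify the support of the class computed by the integral with a closed subscheme of $V$ that is contained in $S$, by working term-by-term in the power series expansion guaranteed by Corollary~\ref{pows}. First I would triangulate the Newton region $N$ and invoke Lemma~\ref{simplexco}, so that the integral becomes a finite sum of contributions, each of the form $\hVol(T)\,X_1\cdots X_n / \bigl(\prod_{\ell}(1+\uv_\ell\cdot\uX)\prod_j X_{i_j}\bigr)$. After the cancellation of the $X_{i_j}$ in the denominator against the corresponding factors in the numerator, each such contribution is $\hVol(T)\,\bigl(\prod_{i\in F}X_i\bigr) / \prod_{\ell}(1+\uv_\ell\cdot\uX)$, where $F$ indexes the `finite' coordinates of the simplex $T$. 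Expanding the geometric series $1/(1+\uv_\ell\cdot\uX)=\sum_{k\ge 0}(-\uv_\ell\cdot\uX)^k$ and multiplying out, every monomial appearing in the full expansion is (up to an integer coefficient) of the form $\prod_i X_i^{b_i}$ where the exponent vector $\underline b=(b_1,\dots,b_n)$ dominates, coordinatewise, some lattice point lying in $N$: namely a point of the form $\ue_F + \sum_\ell k_\ell\,\uv_\ell$ with $\uv_\ell$ the finite vertices of one of the simplices of the triangulation. Since $N$ is a union of translated orthants in its own right — more precisely, $N$ is closed under coordinatewise increase within the positive orthant — such a dominating exponent vector $\underline b$ itself lies in $N$.

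Next I would translate this combinatorial statement into geometry. The key point is the description of $N$ as the complement (in the positive orthant) of the convex hull of the translated orthants based at the exponent vectors of the defining monomials $D_1,\dots,D_r$. Writing $D_j=\sum_i d_{ji}X_i$, the complement $N'$ consists exactly of those $\underline a\in\Rbb^n_{\ge 0}$ that dominate some $\underline d_j$; equivalently, $\underline a\in N$ iff for every $j$ there is a coordinate $i$ with $a_i<d_{ji}$. I would then observe that the scheme $S=D_1\cap\cdots\cap D_r$ is cut out, locally where only $X_{i_1},\dots,X_{i_m}$ pass through a point, by the ideal $(\underline x^{\underline d_1},\dots,\underline x^{\underline d_r})$ in the $X_i$ (these being local equations). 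A monomial $\underline x^{\underline b}$ lies in this ideal precisely when $\underline b$ dominates some $\underline d_j$, i.e. precisely when $\underline b\notin N$. Hence every exponent vector $\underline b$ that occurs in the power-series expansion of the integral, lying in $N$, gives a monomial $\underline x^{\underline b}$ that is \emph{not} in the ideal of $S$ — but what we actually need is the reverse: that the corresponding intersection-product class is supported on $S$. For this I would use the standard fact that $X_i^{b_i}$, as an operator on the Chow group, factors through the class of the divisor $X_i$ taken with multiplicity, so that the product $\prod_i X_i^{b_i}\cap[V]$ is naturally a class on the subscheme $\bigcap_i(b_iX_i)$; and whenever $\underline b\in N$, one checks directly from $N$'s definition that $\bigcap_i (b_iX_i)\subseteq S$ is false in general — so I must instead argue that the \emph{sum} over all simplices, not each monomial individually, is supported on $S$.

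Reconsidering, the cleanest route is the one the residual-intersection machinery already suggests: reduce to $S$ with $\bigcap D_j=\emptyset$ by Corollary~\ref{remdiv} (factoring out the gcd divisor $D$, which only shifts $N$ by $\underline d$ and correspondingly twists both sides), and then show that in that case the relevant class lands on the empty scheme, i.e. vanishes — but Lemma~\ref{supp} is stated before that reduction is available in the generality needed, so instead I would argue intrinsically. I would let $\cI_S\subseteq\cO_V$ be the ideal sheaf of $S$ and show that the class $\sigma:=\int_N(\cdots)$, a priori in $A_*V$, lies in the image of the proper pushforward $A_*S\to A_*V$. For this it is enough to show $\sigma$ is killed after restriction to the open complement $U=V\setminus S$. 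On $U$, at least one of the local equations of the $D_j$ is a unit at every point, so the blow-up picture degenerates; more usefully, on $U$ the scheme-theoretic intersection $\bigcap_i(b_iX_i)$ for any $\underline b\in N$ meets $U$ in a proper closed subset already contained in $V\setminus S$ is \emph{not} automatic — so the honest argument is: on $U$, $N$ may be replaced by the full orthant (the "missing" region near the $\underline d_j$ is irrelevant once those monomials are units), and the integral over the full orthant is the constant $1\in A_0$, which pulls back from a point; chasing this through shows $\sigma|_U$ is the image of $[U]$ under... no — cleanly, $s(S,V)|_U=0$ since $S\cap U=\emptyset$, and the same geometric input (some $D_j$ is locally trivial) forces the term-by-term intersection products defining $\sigma$ to vanish on $U$, because each term is a product including a factor $X_i^{b_i}$ with $X_i$ meeting $U$ properly and $\underline b$ ranging only over $N$, whose defining inequalities guarantee $\underline x^{\underline b}\in\cI_{S,p}$ at every $p\in S$ hence the product class is supported away from $U$.

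The main obstacle I anticipate is exactly this last step: making precise and rigorous the claim that ``every monomial $\underline x^{\underline b}$ with $\underline b$ in the Newton region lies in the ideal of $S$'' translates into ``the corresponding term of the intersection-product series is supported on $S$.'' The subtlety is that $\underline b\in N$ means $\underline x^{\underline b}\notin\cI_S$ \emph{generically}, which is the wrong direction; what saves the argument is that $N$ is the \emph{closure} of the complement of the orthants, so the genuinely contributing exponent vectors — those arising as $\ue_F+\sum k_\ell\uv_\ell$ for $\uv_\ell$ the finite vertices of the triangulating simplices, which all lie on the boundary $\partial N$ between $N$ and the $D_j$-orthants — do dominate some $\underline d_j$, putting $\underline x^{\underline b}\in\cI_S$ after all. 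So the real work is: (i) verify that every finite vertex $\uv_\ell$ of every simplex in a triangulation of $N$, together with the base points $\ue_F$ used in the expansion, produces exponent vectors dominating one of the $\underline d_j$ (this is a property of the specific triangulation adapted to $N$, and I would choose the triangulation so that its finite vertices are exactly the ``staircase'' lattice points defining $\partial N$); and (ii) invoke the standard fact (Fulton, Ch.~1–2) that if $\underline x^{\underline b}$ is a section of $\cI_S$ then the intersection operator $\prod_i X_i^{b_i}$ factors through $A_*S\to A_*V$. Step (i) is the genuinely new combinatorial content; step (ii) is formal.
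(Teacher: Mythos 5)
Your opening move---triangulating $N$ and reducing via Lemma~\ref{simplexco} to the contribution $\hVol(T)\,\prod_{i\in F}X_i\big/\prod_\ell(1+\uv_\ell\cdot\uX)$ of a single simplex---matches the paper, but the argument then takes a wrong turn it never recovers from. Two concrete problems. First, the claim that ``$N$ is closed under coordinatewise increase within the positive orthant'' is false: $N$ is the \emph{complement} of the convex hull of translated positive orthants, hence it is closed under coordinatewise \emph{decrease}; already for the ideal $(x^2)$ in one variable, $N=[0,2]$ is not upward closed. Second, and more seriously, the combinatorial statement you isolate as ``the genuinely new content''---that every exponent vector $\underline b=\ue_F+\sum_\ell k_\ell\,\uv_\ell$ occurring in the expansion dominates some $\underline d_j$, so that $\underline x^{\underline b}$ lies in the ideal $I_S$---is false no matter how the triangulation is chosen. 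The expansion of $\prod_{i\in F}X_i\big/\prod_\ell(1+\uv_\ell\cdot\uX)$ begins with the term $\prod_{i\in F}X_i$ itself (all $k_\ell=0$), whose exponent vector is the squarefree $\ue_F$; for $S$ given by $(x^2)$ this is $x\notin(x^2)$, and for $(x^2,y^2)$ it is $xy$, which dominates neither generator exponent. So ideal membership of the individual monomials is simply not available, and your step (i) cannot be carried out.

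The way out, and the paper's actual argument, is that ``supported on $S$'' is a set-theoretic condition, so one does not need $\underline x^{\underline b}\in I_S$ term by term; one only needs the single containment of closed subsets $\cap_{i\in F}X_i\subseteq S$, for then \emph{every} term of the simplex's contribution, having $\prod_{i\in F}X_i$ as a factor and otherwise only intersecting further with divisors, is supported there. That containment is equivalent to saying that every generator exponent $\underline d_k$ has a strictly positive coordinate in $F$, and this follows from $T\subseteq N$: if some $\underline d_k$ were supported entirely on the set $J=F^c$ of unbounded directions of $T$, then points of $T$ far out in the $J$-directions would land in the orthant translated at $\underline d_k$, contradicting $T\subseteq N$. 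In your $(x^2,y^2)$-type examples this is exactly what saves the day: $X_1\cap X_2$ is contained in $V(x^2,y^2)$ as a set even though $xy\notin(x^2,y^2)$. Your paragraph about restricting to $U=V\setminus S$ gestures toward this set-theoretic viewpoint but is never completed; I would discard the ideal-membership strategy entirely and argue with supports as above.
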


In particular, the class equals $0$ if $S=\emptyset$. We will in fact prove that the
class computed by the integral is a sum of classes obtained by applying Chow operators 
to classes of subschemes of $S$. As such, the integral defines an element in the Chow 
group $A_*S$.

\begin{proof}
The integral may be computed by triangulating $N$ and applying Lemma~\ref{simplexco}.
Thus, it suffices to show that if $T$ is a simplex contained in $N$ with finite vertices 
$\uv_0,\dots, \uv_r$ and infinite vertices $\ua_j$, $j\in J$, then the support of the class
\[
\frac{\hVol(T)\, X_1\cdots X_n}
{\prod_{\ell =0}^r (1+\uv_\ell\cdot \uX) \prod_{j\in J} X_j}
\]
is contained in $S$. Hence, it suffices to show that the intersection product
$\prod_{j\not\in J} X_j$ is supported on $S$, i.e., that $\cap_{j\not\in J} X_j$ is
contained in $S$. The coordinates $\ua_j$, $j\not \in J$, span the subspace
containing the (bounded) projection of $T$ along its unbounded directions.
Since $T\subseteq N$, the simplex spanned by $\ue_j$, $j\not\in J$, and
$\ua_j$, $j\in J$, is contained in $N$. This is the Newton region for  the
ideal generated by $X_j$, $j\not\in J$. It follows that this ideal contains the ideal
of $S$, concluding the proof.
\end{proof}

\subsection{Proof of Lemma~\ref{industep}}\label{prindu}
With notation as in the beginning of this section (see Remark~\ref{nota}), we have to
prove that the integral appearing in~\eqref{eq:main} is preserved by the push-forward
by the blow-up morphism $\pi: \Til V \to V$:
\begin{equation}\label{eq:pf0}
\pi_*\left(\int_{\hN} \frac{(n+1)! E\Til X_1\cdots \Til X_n da_0 da_1\cdots da_n}
{(1+a_0 E + a_1 \Til X_1+\cdots a_n \Til X_n)^{n+2}}\right)
= \int_N \frac{n! X_1\cdots X_n da_1\cdots da_n}{(1+a_1 X_1+\cdots a_n X_n)^{n+1}}
\quad,
\end{equation}
where $N$ is the Newton region for the intersection of monomials $D_1,\dots, D_r$
in the $X_i$'s, and~$\hN$ is the Newton region for the intersection of the
monomials $\pi^{-1}(D_1),\dots, \pi^{-1}(D_r)$ in $E$ and $\Til X_i$.
Here we are using coordinates $a_1,\dots$ corresponding to $X_1,\dots$,
and coordinates $a_0, a_1,\dots$ corresponding to $E, \Til X_1,\dots$.

Since the integral over the positive orthant is $1$, we may equivalently 
(see Remark~\ref{techn}) show that
\begin{equation}\label{eq:pf}
\pi_*\left(\int_{\hN'} \frac{(n+1)! E\Til X_1\cdots \Til X_n da_0 da_1\cdots da_n}
{(1+a_0 E + a_1 \Til X_1+\cdots a_n \Til X_n)^{n+2}}\right)
= \int_{N'} \frac{n! X_1\cdots X_n da_1\cdots da_n}{(1+a_1 X_1+\cdots a_n X_n)^{n+1}}
\quad,
\end{equation}
where $N'$, $\hN'$ are the complements of $N$, $\hN$ in the 
corresponding positive orthants. We will construct compatible triangulations $\hcU$
and $\cU$ of $\hN'$, $N'$ respectively, and use Lemma~\ref{simplexco} to analyze
the effect of $\pi_*$ on the corresponding contributions to the integrals in \eqref{eq:pf}.

\begin{remark}\label{techn}
A subtlety should be mentioned here.
On the face of it, \eqref{eq:pf} is an equality in the Chow group {\em of $V$,\/}
while our aim is to prove Theorem~\ref{main} as an equality {\em in~$A_*S$.\/} 
This is only an apparent difficulty. The integral $\int_{\hN'}$ on the left is shorthand
for its expansion as a power series in the parameters $\Til X_i$ (cf.~Corollary~\ref{pows}),
and this well-defined series has the form 
\[
1+\hp(E,\Til X_1,\dots, \Til X_n)
\]
where $\hp(E,\Til X_1,\dots, \Til X_n)$ is supported on $\pi^{-1}(S)$ after evaluation as a sum 
of intersection products in $\Til V$. Indeed, $\hp(E,\Til X_1,\dots, \Til X_n)$ is simply the $\int_{\hN}$
appearing in~\eqref{eq:pf0}. Likewise, the right-hand side of~\eqref{eq:pf} is a well-defined series
$1+p(X_1,\dots, X_n)$, where $p(X_1,\dots, X_n)$ is a sum of terms supported on $S$,
equaling the $\int_{N}$ on the right-hand side of~\eqref{eq:pf0}.
The sense in which \eqref{eq:pf} should be interpreted, and in which it will be proven, is that
the push-forward {\em of each\/} term in the series on the left contributes to a summand in
the series on the right. With the exception of $\pi_*(1)=1$, all these these push-forwards 
map classes in $A_*\pi^{-1}(S)$ to classes in $A_*S$. In particular, 
$\pi_*( \hp(E,\Til X_1,\dots, \Til X_n)) = p(X_1,\dots, X_n)$ {\em in $A_*S$,\/} which is 
precisely~\eqref{eq:pf0}.
\qede\end{remark}

With the notation introduced in \S\ref{intra}, we view $N'$ as the convex hull of 
\[
\uv_1,\cdots, \uv_r\quad;\quad \ua_1,\dots, \ua_n
\]
where $\uv_i$, $i=1,\dots,r$, are the lattice points corresponding to the 
monomials $D_i$. Likewise, $\hN'$ is the convex hull of
\[
\huv_1,\cdots, \huv_r\quad; \quad \ua_0,\ua_1,\dots, \ua_n
\]
where $\huv_i$, $i=1,\dots,r$, correspond to $\pi^{-1}(D_i)$. By Remark~\ref{nota},
the points $\huv_i$ are the lifts of the points $\uv_i$ to the hyperplane $H$ in 
$\Rbb^{n+1}$ with equation $a_0=a_1+a_2$. We let $M=\{\uv_1,\cdots, \uv_r\}$,
$\hM=\{\huv_1,\cdots, \huv_r\}$.
Note that $\ua_3, \dots, \ua_n$ belong to $H$, while $\ua_0, \ua_1, \ua_2$ do not. 
In fact, $\ua_0$ belongs to one of the two half-spaces determined by $H$ and
$\ua_1, \ua_2$ to the other.

To obtain the triangulations $\hcU$, $\cU$, we use the following procedure.

\begin{itemize}
\item Let $\cT$ be any triangulation of the convex hull of $M\cup \{\ua_3,\dots, \ua_n\}$.
\item Let $\hcT$ be the lift of $\cT$ to the hyperplane $H$. 
This is a triangulation of the convex hull of $\hM\cup \{\ua_3,\dots, \ua_n\}$.
\item Let $\hcU_1$ be the triangulation of the convex hull of $\hM\cup \{\ua_1, 
\ua_2,\underline a_3,\dots, \underline a_n\}$ obtained by first taking {\em pyramids\/} 
over all simplices in $\hcT$ with apex $\ua_1$ (cf.~\cite{MR2743368}, 
\S4.2.1), then {\em placing\/} $\ua_2$ on the resulting triangulation (cf.~\cite{MR2743368}, \S4.3.1).
\item Complete $\hcU_1$ to a triangulation $\hcU$ of $\hN'$, by placing $\ua_0$.
\end{itemize}

\begin{lemma}\label{trialem}
With notation as above:

(i) Each simplex $\sigma$ of top dimension ($=n$) in $\hcT$ determines {\em two\/}
simplices $\hsigma_0$, resp., $\hsigma_1$ of top dimension ($=n+1$) in $\hcU$, 
namely the pyramids over $\sigma$ with apex $\ua_0$, resp., $\ua_1$.

(ii) Every top-dimensional simplex in $\hcU$ including $\ua_2$ also includes $\ua_0$
or $\ua_1$.
\end{lemma}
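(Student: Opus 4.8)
The statement is really just bookkeeping about the three "placing/pyramid" operations used to build $\hcU$, so I would prove (i) and (ii) by carefully unwinding those constructions in reverse order, keeping track of which new vertices ($\ua_1$, $\ua_2$, $\ua_0$) were added and by which operation. The key geometric input, already recorded in \S\ref{prindu}, is that $\ua_3,\dots,\ua_n\in H$, that $\ua_1,\ua_2$ lie on one side of $H$, and that $\ua_0$ lies (strictly) on the other side; in particular $\ua_0$ is \emph{beyond} every facet of $\hcU_1$ that is visible from it, and the hyperplane $H$ itself separates $\ua_0$ from the points $\ua_1,\ua_2$ and from $\hM$ (which lies on $H$).

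\textbf{Part (i).} Fix a top-dimensional simplex $\sigma$ of $\hcT$; since $\hcT$ triangulates the convex hull of $\hM\cup\{\ua_3,\dots,\ua_n\}$, which lies entirely in $H$, the simplex $\sigma\subseteq H$. The first operation forming $\hcU_1$ takes pyramids with apex $\ua_1$ over all simplices of $\hcT$, so $\hsigma_1 := \ua_1 * \sigma$ is a top-dimensional ($=n+1$) simplex of $\hcU_1$, hence of $\hcU$ — placing $\ua_2$ and then $\ua_0$ can only subdivide facets visible from those points, and $\hsigma_1$ survives because it is not subdivided (I must check this: $\ua_2$ is placed, so it only refines cells whose interiors it "sees" through the boundary; I will argue $\hsigma_1$ is not among them by the position of $\ua_2$, and likewise for $\ua_0$). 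The last operation places $\ua_0$ on $\hcU_1$ to get $\hcU$; by the definition of placing, each facet $\sigma$ of $\hcU_1$ that is visible from $\ua_0$ spawns the new top-dimensional simplex $\ua_0 * \sigma$. I claim every top-dimensional $\sigma\in\hcT$, viewed as an $n$-dimensional cell lying in $H$, is a facet of $\hcU_1$ visible from $\ua_0$: it is a face of $\hsigma_1\in\hcU_1$, it lies in $H$ which separates $\ua_0$ from the rest of $\hcU_1$, so it is a "lower" facet as seen from $\ua_0$. Hence $\hsigma_0 := \ua_0 * \sigma$ is a top-dimensional simplex of $\hcU$. Finally $\hsigma_0\ne\hsigma_1$ (different apices) and these are the only two top-dimensional simplices of $\hcU$ whose vertex set meets $\hM\cup\{\ua_3,\dots,\ua_n\}$ in exactly $\sigma$'s vertices — one above $H$, one below.

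\textbf{Part (ii).} Let $\tau$ be a top-dimensional simplex of $\hcU$ with $\ua_2$ among its vertices. Since $\hcU$ is obtained from $\hcU_1$ by placing $\ua_0$, either $\tau\in\hcU_1$ or $\tau = \ua_0 * \sigma$ for a facet $\sigma$ of $\hcU_1$ visible from $\ua_0$. In the second case $\ua_0\in\tau$ and we are done. In the first case $\tau\in\hcU_1$, which was built by placing $\ua_2$ on the pyramid triangulation $\{\ua_1 * \rho : \rho\in\hcT\}$; the simplices containing $\ua_2$ are exactly $\ua_2 * \sigma'$ for facets $\sigma'$ of that intermediate complex visible from $\ua_2$. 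I must show every such visible facet $\sigma'$ already contains $\ua_1$: the facets of $\bigcup_{\rho}(\ua_1 * \rho)$ either contain $\ua_1$ or are of the form $\rho$ itself (a boundary facet of $\hcT$ in $H$). Using that $\ua_1$ and $\ua_2$ lie strictly on the same side of $H$ while $\hcT\subseteq H$, the boundary facets $\rho\subseteq H$ are \emph{not} visible from $\ua_2$ (it sees only the "upper" boundary, which is made of faces through $\ua_1$). Hence the visible facets all contain $\ua_1$, so $\tau = \ua_2 * \sigma'$ with $\ua_1\in\sigma'\subseteq\tau$, giving $\ua_1\in\tau$.

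\textbf{Main obstacle.} The real work is the visibility bookkeeping: making precise, for each of the three placing/pyramid steps, exactly which facets of the current complex are "visible" from the new point, and checking that placing a point never destroys the specific simplices I need to survive. This is where the separating-hyperplane facts ($\ua_0$ strictly on one side of $H$, $\ua_1,\ua_2$ strictly on the other, $\hM$ and $\ua_3,\dots,\ua_n$ on $H$) do all the heavy lifting; I expect the argument to reduce to the statement that a point strictly on one side of a supporting hyperplane sees precisely the facets lying in, or on its own side of, that hyperplane. Once that visibility lemma is isolated, (i) and (ii) follow by the case analysis above; I would state it explicitly (citing \cite{MR2743368}, \S4.2--4.3 for the definitions of pyramid and placing) rather than re-deriving it.
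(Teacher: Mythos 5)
Your proposal is correct and follows essentially the same route as the paper: both arguments reduce to tracking visibility through the pyramid/placing steps, using that $\hcT$ lies in $H$, that $\ua_0$ lies strictly on the opposite side of $H$ from $\ua_1,\ua_2$ (so the facets of $\hcT$ are visible from $\ua_0$ and $\ua_1$ but not from $\ua_2$), and that $\ua_2$ is placed after $\ua_1$. The paper states this more tersely as a ``direct consequence of the construction''; your version just makes the visibility bookkeeping explicit.
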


\begin{proof}
Both points are direct consequences of the construction.

For (i), observe that all simplices of $\hcT$ (hence contained in $H$) are visible 
from both $\ua_0$ and $\ua_1$, since these points are on opposite sides of $H$, and 
$\ua_2$ is placed after $\ua_1$ in the construction.

For (ii), note that the top dimensional simplices in $\hcT$ are not visible from $\ua_2$ 
in the construction, since $\ua_2$ is on the same side of $H$ as $\ua_1$, and it is placed 
after $\ua_1$.
\end{proof}

\begin{itemize}
\item By construction, $N'$ is the convex hull of the projection in the $a_0$ direction 
of $\hM$ and $\ua_1,\dots, \ua_n$; that is, it is the {\em contraction\/} of $\hN'$
in the sense of~\cite{MR2743368}, Definition~4.2.19. By Lemma~4.2.20 
in~\cite{MR2743368}, we obtain a triangulation $\cU$ of $N'$ by taking the {\em links\/}
of $\ua_0$ with respect to $\hcU$ (cf.~Definition~2.1.6 in~\cite{MR2743368}).
\end{itemize}

The simplices in $\cU$ correspond precisely to the simplices in $\hcU$
of which $\ua_0$ is a vertex. 

We have constructed related triangulations $\hcU$ of $\hN'$ and $\cU$ 
of $N'$, and we have to study the effect of $\pi_*$ on the contributions to $\int_{\hN'}$
due to the top-dimensional simplices in $\hcU$ according to Lemma~\ref{simplexco}. 
We let $U$, resp., $\hU$ be the set of top-dimensional simplices in $\cU$, resp., $\hcU$.
The simplices in $U$ are in one-to-one correspondence with the simplices in $\hU$
containing $\ua_0$.

Each top dimensional simplex $\hsigma$ in $\hcU$ is the convex hull of an 
$r$-dimensional face in $\hcT$ and $n+2-r$ points at infinity, which may or 
may not include $\ua_0$, $\ua_1$, $\ua_2$. We split the set~$\hU$ into a disjoint
union $\hU_0 \amalg \hU_1 \amalg \hU' \amalg \hU''$, according to these different
possibilities.
\begin{itemize}
\item $\hU_0$ consists of the simplices $\hsigma$ which include $\ua_0$
and none of $\ua_1$, $\ua_2$. 
\item $\hU_1$ likewise consists of the simplices $\hsigma$ which include 
$\ua_1$ and none of $\ua_0$, $\ua_2$. 
\item $\hU'$ consists of the simplices $\hsigma$ which contain $\ua_0$ and at 
least one of $\ua_1$, $\ua_2$.
\item $\hU''$ consists of the simplices $\hsigma$ which do not include $\ua_0$,
and either include both or none of $\ua_1$ and $\ua_2$.
\end{itemize}
The remaining possibility, i.e., simplices which contain $\ua_2$ and none of
$\ua_0$, $\ua_1$, is excluded by Lemma~\ref{trialem} (ii).
The simplices in $\hU_0$, resp., $\hU_1$ are pyramids over top-dimensional
simplices in $\hcT$ with apex $\ua_0$, resp., $\ua_1$ (Lemma~\ref{trialem} (i)).

As noted above, by construction there is a one-to-one correspondence between 
$\hU'\cup \hU_0$ and $U$, associating with each $\hsigma\in \hU'\cup \hU_0$ 
the link of $\ua_0$ with respect to $\hsigma$. 
This natural bijection $\hU'\cup \hU_0 \to U$ is {\em not\/} 
\mymarginpar{\dbend}
compatible with push-forward at the level of the contributions to the integral $\int_{N'}$.
However, the decomposition found above allows us to define {\em another\/} 
realization of $U$. Define $\alpha: \hU' \amalg \hU_1 \to U$ as follows:
\begin{itemize}
\item If $\hsigma\in \hU'$, let $\alpha(\hsigma)$ be the link of $\ua_0$ with respect
to $\hsigma$.
\item If $\hsigma\in \hU_1$, then $\hsigma$ is the pyramid over an $n$-dimensional
simplex $\sigma$ in $\hcT$ with apex~$\ua_1$. Let $\alpha(\hsigma)$ be the link of 
$\ua_0$ with respect to the pyramid $\hsigma$ over $\sigma$ with apex $\ua_0$.
\end{itemize}
In other words, in the second case $\alpha(\hsigma)$ is the simplex in $\cT$
(and hence in $\cU$) corresponding to the simplex $\sigma$ of $\hcT$.
The function $\alpha$ is evidently a bijection.

\begin{lemma}\label{pflemma}
Let $\hsigma\in \hU$.
\begin{itemize}
\item If $\hsigma \in \hU_0 \amalg \hU''$, then the contribution of $\hsigma$
to the integral over $\hN'$ pushes forward to~$0$.
\item If $\hsigma \in \hU' \amalg \hU_1$, then the contribution of $\hsigma$
to the integral over $\hN'$ pushes forward to the contribution of $\alpha(\hsigma)\in U$
to the integral over $N'$.
\end{itemize}
\end{lemma}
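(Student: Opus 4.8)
The plan is to compute, for each top-dimensional $\hsigma \in \hU$, its contribution to $\int_{\hN'}$ via Lemma~\ref{simplexco}, push it forward using the rules collected in Remark~\ref{nota}, and compare with the contribution of $\alpha(\hsigma)$ to $\int_{N'}$. Write $\hsigma$ as the convex hull of a face $\sigma = \{\huv_{i_0},\dots,\huv_{i_s}\}$ of $\hcT$ (so the $\huv_{i_k}$ lie on the hyperplane $H: a_0 = a_1+a_2$) together with $n+2-s$ points at infinity. By Lemma~\ref{simplexco}, the contribution of $\hsigma$ is
\[
\frac{\hVol(\hsigma)\, E\,\Til X_1\cdots \Til X_n}
{\prod_{k=0}^s (1+\huv_{i_k}\cdot(E,\Til X_1,\dots,\Til X_n)) \prod_{j\in \hJ} X_j}\,,
\]
where $\hJ$ indexes the infinite vertices. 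Since $\huv_{i_k}$ lies on $H$, its $a_0$-coordinate equals the sum of its $a_1$- and $a_2$-coordinates, so by the last bullet of Remark~\ref{nota} the factor $1+\huv_{i_k}\cdot(E,\Til X_1,\dots,\Til X_n)$ is exactly $\pi^*(1+\uv_{i_k}\cdot\uX)$; this is the key point that lets the projection formula be applied termwise.

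\textbf{Case $\hsigma\in\hU_0\amalg\hU''$.} For $\hsigma\in\hU_0$, the infinite part contains $\ua_0$ but neither $\ua_1$ nor $\ua_2$; the numerator $E\,\Til X_1\cdots\Til X_n$ divided by $\prod_{j\in\hJ}X_j$ then contains the factor $E\cdot\Til X_2$ or, more precisely, after cancelling $E$ (whose index $0$ is in $\hJ$) we are left with an intersection product still carrying $\Til X_1\cdot\Til X_2$ in a form that, after using $\pi^*$ on the remaining pieces, reduces to $\pi_*(\Til X_1\cdot\Til X_2\cdot\pi^*(\cdots)) = (\pi_*(\Til X_1\cdot\Til X_2))\cdot(\cdots) = 0$, since $\Til X_1\cap\Til X_2=\emptyset$. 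For $\hsigma\in\hU''$ containing neither $\ua_1$ nor $\ua_2$, the surviving product carries $\Til X_1\cdot\Til X_2$ and the same vanishing applies; for $\hsigma\in\hU''$ containing both $\ua_1$ and $\ua_2$, one of $\Til X_1,\Til X_2$ is already used up at infinity but $E$ survives, and $\pi_*(E\cdot X_i)=0$ for $i\ge 3$ together with $\pi_*(E)=0$ (as $E$ has positive fibre dimension over its image of codimension $2$) forces the push-forward to vanish. So in every $\hU_0\amalg\hU''$ case the term dies, as claimed.

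\textbf{Case $\hsigma\in\hU'\amalg\hU_1$.} Here one checks that the push-forward of the contribution of $\hsigma$ equals the contribution of $\alpha(\hsigma)$. For $\hsigma\in\hU_1$ — a pyramid over $\sigma\in\hcT$ with apex $\ua_1$ — the infinite part is $\{\ua_1\}\cup\{\ua_j : j\in J\}$ with $1\notin J$, $2\notin J$; cancelling $\Til X_1$ against the $\ua_1$-factor leaves $E\cdot\Til X_2\cdot\prod_{j\notin J, j\ne 1,2}\Til X_j$ over the finite denominators, and $\pi_*$ of this, using $\pi_*(E\cdot\Til X_2)=X_1\cdot X_2$ and $\pi_*(\Til X_j)=X_j$ via $\pi^*$, yields $\hVol(\hsigma)\,X_1\cdots X_n / (\prod(1+\uv\cdot\uX)\prod_{j\in J}X_j)$ — exactly the contribution of the simplex $\sigma\in\cT\subseteq\cU$, which by definition is $\alpha(\hsigma)$, provided $\hVol(\hsigma)=\hVol(\sigma)$ (true since a pyramid with apex at infinity has the same normalized volume as its base). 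For $\hsigma\in\hU'$ — containing $\ua_0$ and at least one of $\ua_1,\ua_2$ — the factor $E$ cancels against the $\ua_0$-index, and $\alpha(\hsigma)$ is the link of $\ua_0$, i.e. $\hsigma$ with $\ua_0$ deleted and the finite vertices projected along $a_0$; the projection-formula bookkeeping (using $\pi^*X_i = \Til X_i$ for $i\ge 3$ and the identity $1+\huv_{i_k}\cdot(E,\underline{\Til X}) = \pi^*(1+\uv_{i_k}\cdot\uX)$ from Remark~\ref{nota}) matches the two contributions, again with equal normalized volumes.

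\textbf{Main obstacle.} I expect the bookkeeping in the $\hU'$ case to be the delicate part: one must simultaneously track (a) which of $E,\Til X_1,\Til X_2$ are consumed at infinity versus appear in the finite denominator via $H$, (b) that the remaining factors are genuinely pullbacks $\pi^*$ so the projection formula applies term by term, and (c) that the normalized volume is preserved under the link/projection operation — this last being the geometric heart, and it is where one uses that $\ua_0$ is on the opposite side of $H$ from $\ua_1,\ua_2$ so that the link of $\ua_0$ in $\hsigma$ is obtained by an honest parallel projection with unit Jacobian in the appropriate normalization (Lemma~4.2.20 of \cite{MR2743368}). Once these three are in hand the comparison is a finite check over the four types; summing over $\hU$ and invoking the bijection $\alpha$ then gives \eqref{eq:pf}.
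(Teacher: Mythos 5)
Your argument is essentially the paper's own proof: the same key observation that the finite-vertex factors $1+\huv\cdot\Til\uX$ are pullbacks $\pi^*(1+\uv\cdot\uX)$ because the $\huv$ lie on $H$, the same four-way case analysis of which of $E,\Til X_1,\Til X_2$ survive in the numerator, and the same use of $\Til X_1\cap\Til X_2=\emptyset$, $\pi_*(E\cdot\Til X_2)=X_1\cdot X_2$, and the projection formula. One small slip to correct: for $\hsigma\in\hU''$ containing both $\ua_1$ and $\ua_2$, \emph{both} $\Til X_1$ and $\Til X_2$ (not just one) are cancelled at infinity --- and this matters, since $\pi_*(E\cdot\Til X_2)=X_1\cdot X_2\neq 0$ --- leaving a numerator $E\cdot X_{i_1}\cdots$ with all $i_j\ge 3$, whose push-forward does vanish.
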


Since $\hU=\hU_0 \amalg \hU_1 \amalg \hU' \amalg \hU''$ and $\alpha$ is a bijection
onto $U$, this lemma verifies \eqref{eq:pf}, proving Lemma~\ref{industep}
and hence concluding the proof of Theorem~\ref{main}.

By construction, each simplex $\hsigma$ in $\hU$ has a set of finite vertices 
$\huv_0, \dots, \huv_r$ {\em in the hyperplane~$H$\/}, and a set of infinite vertices.
According to Lemma~\ref{simplexco}, the corresponding contribution equals
\[
\frac{\hVol(\hsigma)\, C} {\prod_{\ell = 0}^r (1+\huv_\ell \cdot \Til \uX)}
\]
where $\huv\cdot \Til \uX=v_0 E+ v_1 \Til X_1 + \cdots$, and $C$ is
a product of divisors from $E, \Til X_1, \dots$. A key observation here is that
{\em if $\huv=(v_0,\dots, v_n)$ is the lift to $H$ of a corresponding vertex 
$\uv=(v_1,\dots, v_n)$,\/} then $v_0=v_1+v_2$, and it follows that
\[
(1+\huv \cdot \Til \uX) = \pi^* (1 + \uv \cdot \uX)\quad.
\]
Thus, the `denominator' in the contribution of $\hsigma$ is a pull-back.
Also, if $\sigma$ is the contraction of a simplex $\hsigma$ with respect to $a_0$,
then the finite vertices of $\sigma$ are precisely the projections $\uv_\ell$, and 
$\hVol(\sigma)=\hVol(\hsigma)$. By the projection formula, we see that
\[
\pi_*\left(
\frac{\hVol(\hsigma)\, C} {\prod_{\ell = 0}^r (1+\huv_\ell \cdot \Til \uX)}
\right)
=
\frac{\hVol(\sigma)\, \pi_*(C)} {\prod_{\ell = 0}^r (1+\uv_\ell \cdot \uX)}
\quad.
\]
This is the contribution of $\sigma$ to $\int_{N'}$, provided that $\pi_*(C)$ equals
the correct product of divisors corresponding to the infinite vertices of $\sigma$.
These are the infinite vertices of $\hsigma$, with $\ua_0$ removed. In the proof
of Lemma~\ref{pflemma}, $\pi_*(C)$ is either $0$ (in the first point listed in the
lemma) or equals the correct product (in the second).

\begin{proof}
Assume first $\hsigma\in \hU_0\amalg \hU''$. 

If $\hsigma\in \hU_0$, then $\hsigma$ includes $\ua_0$ and 
neither $\ua_1$ nor $\ua_2$. According to Lemma~\ref{simplexco} and the
discussion preceding this proof, the contribution 
of $\hsigma$ to $\int_{\hN'}$ has the form
\[
\frac{\hVol(\hsigma) \Til X_1\cdot \Til X_2 \cdot X_{i_1}\cdots}{\pi^*(\cdots)}
\]
with all the $i_j\ge 3$. (We are now using our convention of writing $X_i$ for $\Til X_i$
for $i\ge 3$, cf.~Remark~\ref{nota}.) This term equals $0$, since 
$\Til X_1\cap \Til X_2=\emptyset$.
If $\hsigma\in \hU''$, then $\hsigma$ does not contains~$\ua_0$ and contains
either both or neither of $\ua_1$ and $\ua_2$. Its contribution has the form
\[
\frac{\hVol(\hsigma) \, E\cdot X_{i_1}\cdots}{\pi^*(\cdots)}
\quad\text{or}\quad
\frac{\hVol(\hsigma)\, E\cdot \Til X_1\cdot \Til X_2 \cdot X_{i_1}\cdots}{\pi^*(\cdots)}
\]
with $i_j\ge 3$. In the first case it pushes forward to $0$ by the projection formula, and in 
the second case it is $0$, again because $\Til X_1\cap \Til X_2=\emptyset$.

This concludes the proof of the first part of the claim.

For the second part, assume $\hsigma\in \hU'\amalg \hU_1$. 
if $\hsigma \in \hU'$, then $\hsigma$ contains $\ua_0$ and $\ua_1$ or $\ua_2$ or both. 
The contribution of $\hsigma$ to $\int_{\hN'}$ has the form
\[
\frac{\hVol(\hsigma) \, X_{i_1}\cdots}{\prod_{\ell = 0}^r (1+\huv_\ell \cdot \Til \uX)}
\quad\text{or}\quad
\frac{\hVol(\hsigma) \, \Til X_\ell\cdot X_{i_1}\cdots}
{\prod_{\ell = 0}^r (1+\huv_\ell \cdot \Til \uX)}
\]
with all the $i_j\ge 3$ and $\ell =1$ or $2$. By the projection formula, these terms push 
forward to the corresponding contributions of $\alpha(\hsigma)$ to the integral over $N'$.

If $\hsigma\in \hU_1$, then $\hsigma=\hsigma_1$ for a top-dimensional
simplex $\sigma=\alpha(\hsigma)$ in $\hcT$; $\hsigma$ includes 
$\ua_1$, and does not include $\ua_0$ and $\ua_2$. 
The contribution of $\hsigma$ to $\int_{\hN'}$ has the form
\[
\frac{\hVol(\hsigma)\, E\cdot \Til X_2\cdot X_{i_1}\cdots}
{\prod_{\ell = 0}^r (1+\huv_\ell \cdot \Til \uX)}
\]
By the projection formula (cf.~Remark~\ref{nota}), this term pushes forward to a 
contribution
\[
\frac{\hVol(\hsigma)\, X_1\cdot X_2\cdot X_{i_1}\cdots}{\prod_{\ell = 0}^r 
(1+\uv_\ell \cdot \uX)}\quad,
\]
matching the contribution of $\alpha(\sigma)$, and concluding the proof.
\end{proof}

\subsection{An example}
A concrete example may clarify the argument presented in the previous section.
Consider the ideal $(x^3, xy, y^3)$. 
The shaded area in the following picture depicts~$N'$ in the plane $\Rbb^2$ with
coordinates $(a_1, a_2)$:
\begin{center}
\includegraphics[scale=.5]{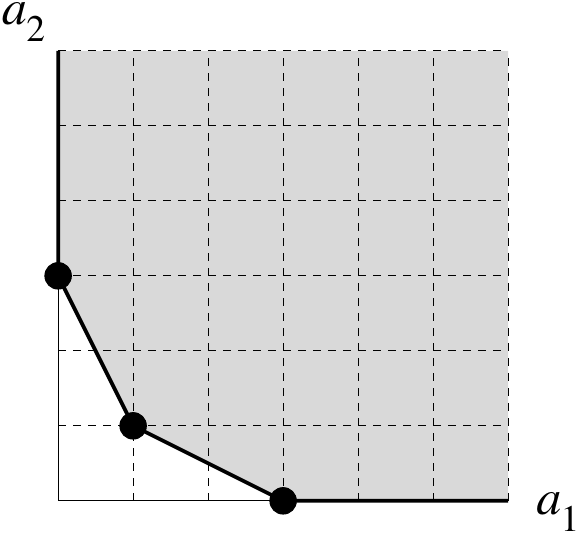}
\end{center}
Lifting to $\Rbb^3$, with `vertical' coordinate $a_0$:
\begin{center}
\includegraphics[scale=.6]{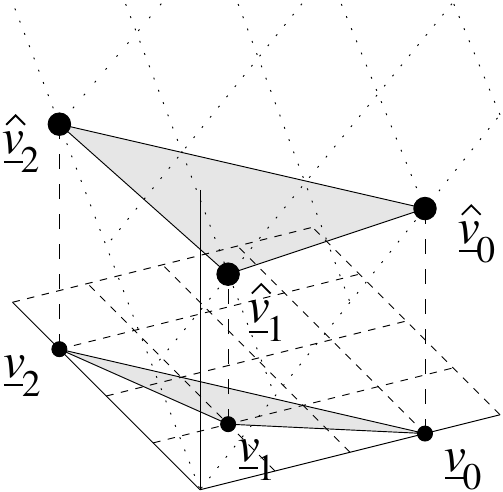}
\end{center}
Here we have shaded the triangle determined by the three monomials in the $(a_1,a_2)$
plane, as well as its lift to the hyperplane $H$ with equation $a_0=a_1+a_2$. 

With notation as in~\S\ref{prindu}, $\cT$ and $\hcT$ consist of the shaded triangles along
with their faces. The one-dimensional faces of $\hcU_1$ are included in the following
picture.
\begin{center}
\includegraphics[scale=.6]{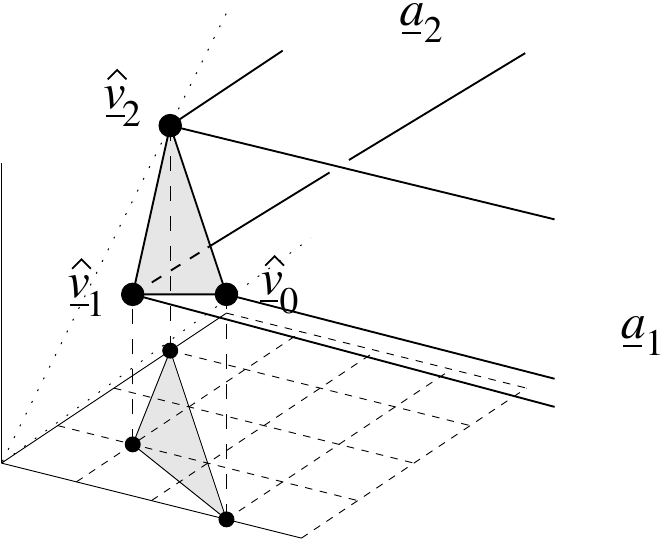}
\end{center}
The top-dimensional simplices in $\hcU_1$ are
\[
\huv_0\huv_1\huv_2\ua_1
\quad\text{and}\quad
\huv_1\huv_2\ua_1\ua_2\quad:
\]
the triangle $\huv_0\huv_1\huv_2$ is extended to a $3$-simplex in the $\ua_1$ direction;
the triangle $\huv_1\huv_2\ua_1$ is visible to $\ua_2$, so it produces a second $3$-simplex.
These two simplices and their faces form~$\hcU_1$. 
The vertex $\uv_0$ is not visible to $\ua_2$, since it is `behind' the plane containing $\huv_1$,
$\huv_2$, $\ua_1$. As remarked in Lemma~\ref{trialem} (ii),
a top-dimensional simplex including $\ua_2$ must also include~$\ua_1$.

The $2$-dimensional faces of $\hcU_1$ visible to $\ua_0$ are 
\[
\huv_0\huv_1\huv_2\quad,\quad
\huv_0\huv_2\ua_1\quad,\quad
\huv_2\ua_1\ua_2\quad.
\]
Therefore $\hcU$ consists of the five $3$-simplices
\[
\huv_0\huv_1\huv_2\ua_1\quad,\quad
\huv_1\huv_2\ua_1\ua_2\quad,\quad
\huv_0\huv_1\huv_2\ua_0\quad,\quad
\huv_0\huv_2\ua_1\ua_0\quad,\quad
\huv_2\ua_1\ua_2\ua_0
\]
and their faces. The corresponding triangulation $\cU$ is the projection of the faces
of $\hcU_1$ visible by $\ua_0$:
\begin{center}
\includegraphics[scale=.4]{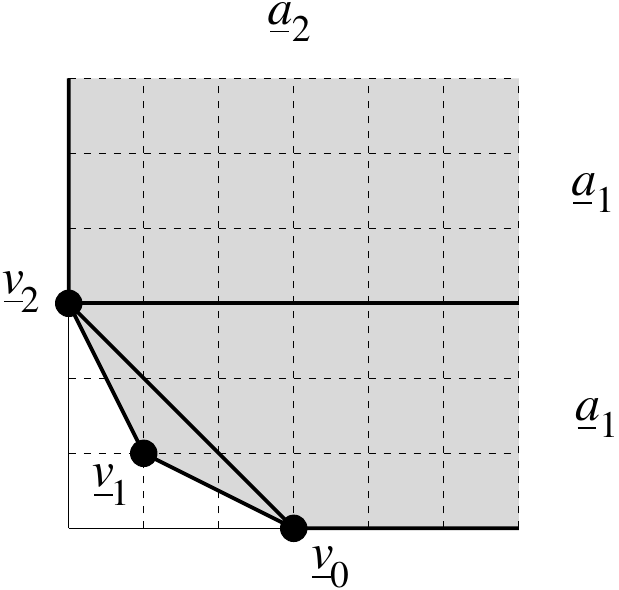}
\end{center}
The five simplices listed for $\hcU$ form the set $\hU$ used in the proof.
The decomposition $\hU_0 \amalg \hU_1\amalg \hU' \amalg \hU''$ is as
follows:
\[
\hU_0 = \{\huv_0\huv_1\huv_2\ua_0\},\quad
\hU_1 = \{\huv_0\huv_1\huv_2\ua_1\},\quad
\hU' = \{\huv_0\huv_2\ua_1\ua_0,\huv_2\ua_1\ua_2\ua_0\},\quad
\hU'' = \{\huv_1\huv_2\ua_1\ua_2\}\quad.
\]
The bijection $\alpha: \hU' \amalg \hU_1\to U$ maps 
\[
\begin{cases}
\huv_0\huv_2\ua_1\ua_0 \mapsto \uv_0\uv_2\ua_1\\
\huv_2\ua_1\ua_2\ua_0 \mapsto \uv_2\ua_1\ua_2 \\
\huv_0\huv_1\huv_2\ua_1 \mapsto \uv_0\uv_1\uv_2
\end{cases}\quad.
\]
Lemma~\ref{pflemma} now states that the push-forward $\pi_*$ will map
the contributions from
\[
\hU_0\amalg \hU'' = \{\huv_0\huv_1\huv_2\ua_0,\huv_1\huv_2\ua_1\ua_2\}
\]
to $0$, and those from 
\[
\hU' \amalg \hU_1 = \{\huv_0\huv_2\ua_1\ua_1\ua_0, \huv_2\ua_1\ua_1\ua_2\ua_0,
\huv_0\huv_1\huv_2\ua_1\ua_1\}
\]
to the total contributions of the simplices in $U$.
The contributions from the first set are
\[
\frac{3\Til X_1\Til X_2}{(1+3\Til X_1+3E)(1+\Til X_1+\Til X_2+2E)(1+3\Til X_2+3E)}
+\frac{E}{(1+\Til X_1+\Til X_2+2E)(1+3\Til X_2+3E)}
\]
and vanish in the push-forward as prescribed by Lemma~\ref{pflemma}. 
Those from the second,
\begin{multline*}
\frac{3 \Til X_2}{(1+3\Til X_1+3E)(1+3\Til X_2+3E)}
+\frac{1}{(1+3\Til X_2+3E)}\\
+\frac{3\Til X_2 E}{(1+3\Til X_1+3E)(1+\Til X_1+\Til X_2+2E)(1+3\Til X_2+3E)}
\end{multline*}
push-forward to
\[
\frac{3 X_2}{(1+3X_1)(1+3X_2)}
+\frac{1}{(1+3X_2)}
+\frac{3X_1 X_2}{(1+3X_1)(1+X_1+X_2)(1+3X_2)}
\]
that is, to the sum of contributions corresponding to the triangulation $\cU$ of $N'$.
\qede


\end{document}